\newtheorem{theorem}{Theorem}
\newtheorem{lemma}{Lemma}
\newtheorem{proposition}{Proposition}
\theoremstyle{definition}
\theoremstyle{remark}
\newtheorem*{rep@theorem}{\rep@title}
\newcommand{\newreptheorem}[2]{%
\newenvironment{rep#1}[1]{%
\def\rep@title{#2 \ref{##1}}%
\begin{rep@theorem}}%
{\end{rep@theorem}}}
\DeclareMathOperator{\cov}{Cov}
\DeclareMathOperator{\var}{Var}
\DeclareMathOperator*{\argmin}{arg\,min}
\newcommand{\iid}[0]{i.i.d.\xspace}
\newcommand{\One}[1]{{\mathbbm{1}}\left\{{#1}\right\}}
\newcommand{\inner}[2]{\langle{#1},{#2}\rangle} 
\newcommand{\biginner}[2]{\left\langle{#1},{#2}\right\rangle} 
\newcommand{\norm}[1]{\lVert{#1}\rVert}
\newcommand{\Norm}[1]{\left\lVert{#1}\right\rVert}
\newcommand{\PP}[1]{\mathbb{P}\left\{{#1}\right\}} 
\newcommand{\EE}[1]{\mathbb{E}\left[{#1}\right]} 
\newcommand{\Ep}[2]{\mathbb{E}_{#1}\left[{#2}\right]}
\newcommand{\EEst}[2]{\mathbb{E}\left[{#1}\ \middle| \ {#2}\right]} 
\newcommand{\VV}[1]{\var\left({#1}\right)} 
\newcommand{\VVst}[2]{\var\left({#1}\ \middle|\ {#2}\right)}
\def\R{\mathbb{R}}
\def\Z{\mathbb{Z}}
\newcommand{\ident}{\mathbf{I}}
\newcommand{\ones}{\mathbf{1}}
\newcommand\independent{\protect\mathpalette{\protect\independenT}{\perp}}
\def\independenT#1#2{\mathrel{\rlap{$#1#2$}\mkern2mu{#1#2}}}
\newcommand{\iidsim}{\stackrel{\mathrm{iid}}{\sim}}
\newcommand{\eps}{\epsilon}
\title{Convergence guarantee for the sparse monotone single index model}
\author{Ran Dai\thanks{Department of Biostatistics, University of Nebraska Medical Center}, Hyebin Song\thanks{Department of Statistics, The Pennsylvania State University},
 Rina Foygel Barber\thanks{Department of Statistics, University of Chicago}, Garvesh Raskutti\thanks{Department of Statistics, University of Wisconsin-Madison}}
\newcommand{\X}{\mathbf{X}}
\newcommand{\Y}{\mathbf{Y}}
\renewcommand{\Z}{\mathbf{Z}}
\newcommand{\iso}{\textnormal{iso}}
\newcommand{\HT}{\Psi_s}
\newcommand{\prp}[1]{\mathcal{P}^{\perp}_{#1}}
\newcommand{\normal}{\mathcal{N}}
\begin{document}
\maketitle

\abstract
 We consider a high-dimensional monotone single index model (hdSIM), which is a semiparametric extension of a high-dimensional generalize linear model (hdGLM), where the link function is unknown, but constrained with monotone and non-decreasing shape. We develop a scalable projection-based iterative approach, the ``Sparse Orthogonal Descent Single-Index Model" (SOD-SIM),  which alternates between sparse-thresholded orthogonalized ``gradient-like" steps and isotonic regression steps to recover the coefficient vector. Our main contribution is that we provide finite sample estimation bounds for both the coefficient vector and the link function in high-dimensional settings under very mild assumptions on the design matrix $\X$, the error term $\epsilon$, and their dependence. The convergence rate for the link function matched the low-dimensional isotonic regression minimax rate up to some poly-log terms ($n^{-1/3}$). The convergence rate for the coefficients is also $n^{-1/3}$ up to some poly-log terms. This method can be applied to many real data problems, including GLMs with misspecified link, classification with mislabeled data, and classification with positive-unlabeled (PU) data. We study the performance of this method via both numerical studies and also an application on a rocker protein sequence data.  

\section{Introduction}
Single index models (SIMs) provide a semi-parametric extension of linear models, where a scalar response variable $Y \in \R$ is related to a predictor vector $X \in \R^p$ via 
\begin{equation}\label{def:SIM}
\mathbb{E}[Y|X] = g_\star (X^\top u_\star)
\end{equation}
for some unknown parameter vector $u_\star$ and unknown link function $g_\star$. 

In this paper, we consider a shape-constrained single index model \eqref{def:SIM} where $g_\star$ belongs to a class of monotonic (but not necessarily smooth) functions and the parameter $u_\star$ is high-dimensional and sparse (with sparsity level $s_\star$). We develop a scalable algorithm and provide theoretical guarantees for high-dimensional single index models. Prior theoretical guarantees for the high-dimensional single index model rely on the distribution of $X$ being known or symmetric (see e.g.~\cite{plan2017,yang2017a}). In a number of settings, such assumptions on $X$ are not satisfied (in section \ref{example_PU} we show $X$ follows a non-symmetric mixture distribution) and prior theoretical guarantees do not apply. In this paper, we provide a scalable algorithm with theoretical guarantees for the high-dimensional single index model under more general design assumptions that include non-symmetric distributions.

We consider a general case where mild assumptions are made on the design matrix $X$, the error $\epsilon := Y-\EEst{Y}{X}$, and their interaction. In particular, $X$ can be deterministic or random. If $X$ is random, the distribution of $X$ can be asymmetric. The distribution of $\epsilon$ may depend on $X$. In addition we assume that the unknown vector $u_\star$ is sparse and high-dimensional. As $g_\star$ is not assumed to be known a priori, the model \eqref{def:SIM} provides a more flexible model specification than a parametric model, while still circumventing the curse-of-dimensionality by avoiding a $p$-dimensional non-parametric function estimation. Two estimation problems exist in single index model framework: estimation of the unknown vector $u_\star$ and of the 1-dimensional regression function $g_\star$. Both are addressed in this paper.

To begin, we provide three motivating examples for the hdSIMs in~\eqref{def:SIM}. We see that each example reduces to the estimation problem of $u_\star$ or $g_\star$, or both under the model \eqref{def:SIM}.

\subsection{Examples}

\subsubsection{Example 1:  Mis-specified generalized linear models (GLMs)}
Generalized linear models (GLMs) are parametric extensions of linear models, which includes many popular models such as logistic and Poisson regression. GLMs relate the conditional expectation of $Y$ given $X$ via a link function $\psi$, such that $\EEst{Y}{X} = \psi^{-1}(X^{\top}u_\star)$, where a link function $\psi$ is assumed to be known. The parameter of interest $u_\star$ is usually estimated via an iterative procedure using the knowledge of $\psi$; therefore, when $\psi$ is misspecified, an output is inevitably biased for $u_\star$, even in an asymptotic sense. Using the single index model framework, we produce an estimate of $u_\star$ without particular link function specification. 

\subsubsection{Example 2:  Classification with corrupted labels}
Another interesting example involves classification with corrupted labels. In particular, instead of observing a sample $(X,\tilde{Y}) \in \mathbb{R}^p \times \{0,1\}$ where $\tilde{Y}$ may follow, e.g., a logistic model, we observe the pair $(X,Y) \in \mathbb{R}^p \times \{0,1\}$ where $Y$ is a systematically corrupted version of $\tilde{Y}$. In particular $\tilde{Y}$ is related to $X$ via $\mathbb{E}[\tilde{Y}|X] = \tilde{g}(X^{\top}u_\star)$ for some unknown parameter vector $u_\star \in \mathbb{R}^p$ and a monotonic function $\tilde{g}$, which is potentially unknown. 

We assume the corrupted sample $Y$ is independent of $X$ given $\tilde{Y}$, which corresponds to a missing-completely-at-random assumption. The corruption structure is completely specified by  corruption probabilities $\rho_1 = \mathbb{P}(Y=0|\tilde{Y}=1)$ (the probability that a positive label is corrupted) and $\rho_0 = \mathbb{P}(Y=1|\tilde{Y}=0)$ (the probability that a negative label is corrupted), assuming $\rho_0+\rho_1<1$ for identifiability of the model. Under these assumptions, $\mathbb{E}[Y|X] = \mathbb{P}(Y=1|X)$ is given by
\begin{align}\label{intro:example2:gstar}
\mathbb{P}(Y=1|X) &= \tilde{g}(X^{\top} u_\star)\mathbb{P}(Y=1|\tilde{Y}=1) + (1-\tilde{g}(X^{\top} u_\star))\mathbb{P}(Y=1|\tilde{Y}=0) \nonumber\\
&= \tilde{g}(X^{\top}u_\star)(1-\rho_1) + (1-\tilde{g}(X^{\top}u_\star))\rho_0\nonumber\\
& = g_\star(X^\top u_\star)
\end{align}
where we let $g_\star (s) :=  \tilde{g}(s)(1-\rho_1)+ (1-\tilde{g}(s))\rho_0$. (Note $g_\star$ is monotonic since $\tilde{g}$ is monotonic and $\rho_0+\rho_1<1$ by assumption.)

 In many practical situations the corruption probabilities $\rho_0$ and $\rho_1$ may be unknown. In such cases, 
even with the assumption of a specific $\tilde{g}$ (such as a sigmoid function $\tilde{g}(u) = 1/(1+\exp(-u))$ in the case of a logistic model for $\tilde{Y}$), with the unknown parameters $\rho_0$ and $\rho_1$, the new link function $g_\star$ is still unknown, which motivates moving beyond the parametric model setting.


\subsubsection{Example 3: Classification with positive-unlabeled (PU) data and unknown prevalence $\pi$}\label{example_PU}
In various applications such as ecology, text-mining, and biochemistry, negative samples are often hard or even impossible to be obtained. In positive-unlabeled (PU) learning, we use positive and unlabeled samples, where positive samples are taken from the sub-population where responses are known to be positive, and unlabeled samples are random samples from the population. 

To make this concrete, we begin with a distribution $\mathcal{D}$ on data $(\tilde{X},\tilde{Y})\in \mathbb{R}^p \times \{0,1\}$. Here $\tilde{X}$ is a feature vector while $\tilde{Y}\in\{0,1\}$ is a true label, ``positive'' (1) or ``negative'' (0). In some applications, it is not possible to observe (negatively) labeled data; instead, we observe unlabeled data (i.e., a feature vector $X$, drawn from the marginal distribution of $\tilde{X}$ under the joint distribution $\mathcal{D}$), and a subsample of positively labeled data (i.e., a feature vector $X$ drawn from the conditional distribution of $\tilde{X}\mid \tilde{Y}=1$, derived from the joint distribution $\mathcal{D}$). The new ``label'' $Y\in\{0,1\}$ then specifies whether $X$ was drawn as an unlabeled sample ($Y=0$) or as a positive-only draw ($Y=1$). Writing $\gamma$ as the ratio of positive-only samples to unlabeled samples, the conditional distribution $Y\mid X$ can be calculated as
\[\mathbb{P}(Y=1\mid X=x) 
= \frac{\gamma \pi^{-1}}{\frac{1}{\mathbb{P}(\tilde{Y}=1\mid \tilde{X}=x)} + \gamma \pi^{-1}},\]
where $\pi = \mathbb{P}(\tilde{Y}=1)$ is the proportion of positive labels under the original distribution $\mathcal{D}$ on $(X,Y)$.
Therefore, we can see that if the distribution of $\tilde{Y}\mid\tilde{X}$, in the underlying population, satisfies the monotone single-index model, then the same is true for the new PU data---specifically, if $\mathbb{P}(\tilde{Y}=1\mid \tilde{X}=x)= \tilde{g}(x^\top u_\star)$
for some monotone function $\tilde{g}$ and some vector $u_\star$, then
\[\mathbb{P}(Y=1\mid X=x)  = g_\star(x^\top u_\star)\]
for 
\[g_\star(v) = \frac{\gamma \pi^{-1}}{\frac{1}{\tilde{g}(v)} + \gamma \pi^{-1}},\]
which is also a monotone function, inheriting this property from $\tilde{g}$.

In \cite{song2020}, this problem is addressed with a parametric approach, by assuming that $\tilde{g}$ is known (commonly the sigmoid function, i.e., assuming a logistic model for $\tilde{Y}\mid\tilde{X}$), and assuming that the proportion $\pi = \mathbb{P}(\tilde{Y}=1)$ of positive labels in the underlying population is known as well. In settings where $\pi$ is unknown, however, this approach can no longer be applied, even if $\tilde{g}$ is known; reliable estimation of such proportion is very challenging and heavily relies on model specification \citep{hastie2013} and can lead to unreliable inference.
In contrast, assuming only that $g_\star$ is a monotone function allows for both unknown $\pi$ and unknown (monotone) $\tilde{g}$.

Another interesting aspect of this PU problem is that the distribution of $X$ is, in general, not symmetric. Even in a setting where the distribution of features in the population (i.e., the marginal distribution of $\tilde{X}$) can plausibly be assumed to be symmetric, this will no longer be the case for $X$---this is because the marginal distribution of $X$ is a mixture between the marginal distribution of $\tilde{X}$, and the conditional distribution of $\tilde{X}\mid\tilde{Y}=1$. Therefore, this setting cannot be addressed with existing methods that rely on a symmetric design assumption.

\subsection{Prior work}

The single index model has been an active area in statistics for a number of decades, since its introduction in econometrics and statistics \cite{ichimura1993, horowitz1996,klein1993}. There is extensive literature about estimation of the index vector $u_\star$, which can be broadly classified into two categories: methods which directly estimate the index vector while avoiding estimation of a ``nuisance'' infinite-dimensional function, and methods which involve estimation of both the infinite-dimensional function and the index vector. 

Methods of the first type usually require strong design assumption such as Gaussian or an elliptically symmetric distribution. \citet{duan1991} proposed an estimator of $u_\star$ using sliced inverse regression in the context of sufficient dimension reduction. $\sqrt{n}$ consistency and asymptotically normality of the estimator are established in the low-dimensional regime, under the elliptically symmetrical design assumption. This result is extended in \citet{lin2018} in high-dimensional regime under similar design assumptions. \citet{plan2017} studied sparse recovery of the index vector $u_\star$  when $X$ is Gaussian, and established a non-asymptotic $\mathcal{O}((s\log (2p/s) /n)^{1/2}) $ mean-squared error bound, where $s$ is a sparsity level, i.e. $s := \|u_\star\|_0$. \citet{ai2014} studied the error bound of the same estimator when the design is not Gaussian and showed that the estimator is biased and the bias depends on the size of $\|u_\star\|_\infty$. The proposed estimator in \citet{plan2017} is generalized in several directions; \citet{yang2017b} propose an estimator based on the score function of the covariate, which relies on prior knowledge of covariate distribution. \citet{chen2017} proposed estimators based on U-statistics which also include \citet{plan2017} as a special case with standard Gaussian design assumption. 

Methods of the second type usually do not require strong design assumption, and our method also falls into this category. One popular approach is via M-estimation or solving estimation equations. Under smoothness assumptions of $g_\star$, methods for a link function estimation were proposed via Kernel estimation \cite{ichimura1993, hardle1993,klein1993,delecroix2003,cui2011}, local polynomials \cite{carroll1997,chiou1998}, or splines \cite{yu2002,kuchibhotla2017}. An estimator for the index vector is then obtained by minimizing certain criterion function such as squared loss \cite{ichimura1993,hardle1993,yu2002} or solving an estimating equation \cite{chiou1998,cui2011}. $\sqrt{n}$ consistency and asymptotic normality of those proposed estimators were established for those estimators in the low-dimensional regime. Another approach is the average derivative method \cite{powell1989, hristache2001}, which takes advantage of the fact $\frac{d}{dx} \EEst{Y}{X=x} = u_\star g'(x^\top u_\star)$. An estimator is then obtained through a non-parametric estimation of $\frac{d}{dx} \EEst{Y}{X=x}$, also under smoothness assumption of $g_\star$. There are also studies for single index model in high-dimensional regime, using PAC-Bayesian approach \cite{alquier2013} or incorporating penalties \cite{wang2012, radchenko2015}.  


There have been an increasing number of studies where the link function is restricted to have a particular shape, such as a monotone shape, but is not necessarily a smooth function. 
\citet{kalai2009} and \citet{kakade2011} investigated single index problem in low-dimensional regime under monotonic and Lipschitz continuous link assumption, and proposed iterative perceptron-type algorithms with prediction error guarantees. In particular, an isotonic regression is used for the estimation of $g_\star$ then update $u_\star$ based on estimated $g_\star$. \citet{ganti2017} extended \cite{kalai2009,kakade2011} to the high-dimensional setting via incorporating a projection step onto a sparse subset of the parameter space. They empirically demonstrated good predictive performance of their algorithms, but no theoretical guarantee were provided. \citet{balabdaoui2019} study the least square estimators under monotonicity constraint and establish $n^{-1/3}$ consistency of the least-square estimator. Also in \citet{balabdaoui2018}, assuming a smooth link function $g_\star$, a $\sqrt{n}$ consistent estimator for the index vector $u_\star$, based on solving a score function, is proposed. 

Our work focuses on estimation of the index vector in high-dimensional regime where an unknown function is assumed to be Lipschitz-continuous and monotonic. We provide an efficient algorithm via iterative hard-thresholding and a non-asymptotic $\ell_2$ and mean function error bound. Unlike \citet{ganti2017} and \citet{kakade2011}, our algorithm does not require to input a Lipschitz constant of the unknown function. 

\subsection{Our contributions}

Our major contributions can be summarized as follows:
\begin{itemize}
\item Develop a scalable projection-based iterative approach,  the ``Sparse Orthogonal Descent Single-Index Model" (SOD-SIM) algorithm, which alternates between sparse-thresholded orthogonalized ``gradient-like" steps and isotonic regression steps to recover the coefficient vector.

\item Provide finite sample convergence guarantees for the SOD-SIM algorithm, both in estimating the parameter vector $u_\star$ and the mean function $g_\star(X^\top u_\star)$ that the error scales as $\mathcal{O}\big(\frac{s_\star^{1/2} \log np}{n^{1/3}}\big)$, where $s_\star$ is the sparsity level of $u_\star$. Note that the minimax rate for low-dimensional isotonic regression is $n^{-1/3}$ and our algorithm achieves this rate up to log factors. This rate also matches the $n^{-1/3}$ rate shown for least square estimator in \cite{balabdaoui2019}. However, the algorithm to obtain the least square estimator is computationally intensive even in low dimensional cases. Whereas in our work, we simultaneously showed the statistical and algorithmic convergence of a computationally efficient algorithm, which can also handle high dimensional case. In estimating $u_\star$, with $X$ being elliptically symmetrically distributed, the link-free slicing regression estimates by \citet{duan1991} has been shown to be $\sqrt{n}$ consistent with asymptotic normal distribution. When $g_\star$ is bounded and continuously differentiable, the score estimates proposed by \citet{balabdaoui2018} has been shown to be $\sqrt{n}$ consistent with asymptotic normal distribution. In comparison, our convergence result is not asymptotic and has milder assumptions on $X$ and $g_\star$; our algorithm is also much more computationally efficient than the score estimator.   
\item Finally we provide empirical study results based on simulated and real data, which support our theoretical findings and also shows that our algorithm performs well compared with existing approaches.
\end{itemize}


\section{Main results}

Let $X_1,\dots,X_n\in\R^p$ be the feature vectors and $Y_1,\dots,Y_n\in\R$ the response values,\footnote{We denote $Y_i$, $i=1,\cdots,n$ as continuous variables for simplicity. In practice, they can be in other data types (continuous/categorical/mixed). } which we model as
\[Y_i = g_\star(X_i^\top u_\star) + Z_i, \]
where $g_\star$ is monotone non-decreasing; $u_\star$ is a $s_\star$-sparse unit vector; and $Z_i\in\R$ denotes the noise term.

We write $\X\in\R^{n\times p}$ to denote the matrix with rows $X_i, i \in \{1,\cdots,n\}$, and $\Y\in\R^n$ and $\Z\in\R^n$ to denote the vectors with entries $Y_i$ and $Z_i$, respectively. For any function $g:\R\rightarrow\R$ and any $v\in\R^p$, we write $g(\X v)$ to mean that $g$ is applied elementwise to the vector $\X v$, i.e. $g(\X v) = \big(g(X_1^\top v),\dots,g(X_n^\top v)\big)$. 

In the remaining of this section, we first present our SOD-SIM algorithm. Then we present the finite sample convergence results for the estimation of $u_\star$ and $g_\star$.

\subsection{Algorithm}\label{sec:Alg}

Before defining our algorithm, we first define notations for the hard-thresholding operator $\HT(\cdot)$ and the isotonic regression operator $\iso(\cdot)$. First let $\HT:\R^p\rightarrow\R^p$ be the ``hard-thresholding operator'' at sparsity level $s$, i.e., 
\[ \HT(v)_i = \begin{cases} v_i \quad i \in S, \\ 0 \quad i \not\in S, \end{cases}\]
where $S\subseteq \{1, \cdots, p\}$ indexes the $s$ largest-magnitude entries of $v$ (ties may be broken with any mechanism).
Next, for any vectors $u\in\R^p$ and $v\in\R^n$, define
\[\iso_{\X u}(v) = \argmin_{w\in\R^n}\left\{\norm{v-w}^2_2 : w_i\leq w_j\text{ whenever }(\X u)_i \leq (\X u)_j\right\},\]
i.e.~the isotonic regression of $v$ onto $\X u$. This convex problem can be solved efficiently using pool-adjacent-violators algorithm (PAVA) \cite{deLeeuw2009}. 

With these definitions in place, we are ready to define our algorithm for solving the sparse single index model.\footnote{Implicitly, the initialization step in our algorithm assumes that $\X^\top(\Y - \bar{Y}\mathbf{1}_n)\neq 0$. In our proofs, we will verify that this holds with high probability.} Given a target sparsity level $s\geq 1$ and a step size $\eta>0$, the algorithm alternates between taking an orthogonalized gradient-like step (with step size $\eta$), and hard-thresholding to enforce $s$-sparsity. The steps of the algorithm are shown in Algorithm~\ref{alg}.

\begin{algorithm}[t]
\caption{Sparse Orthogonal Descent Single-Index Model (SOD-SIM) }
\label{alg}
\begin{algorithmic}
\State \textbf{Initialize:} 
\begin{equation}\label{eqn:u_0}u_0=\frac{\HT\big(\X^\top(\Y - \bar{Y}\mathbf{1}_n)\big)}{\norm{\HT\big(\X^\top (\Y - \bar{Y}\mathbf{1}_n)\big)}_2},\end{equation}
where $\bar{Y} = \frac{1}{n}\sum_i Y_i$ and $\mathbf{1}_n$ is the vector of $1$'s.
\for{$t=1,2,\dots$} 
\State $\bullet$ Compute $\iso_{\X u_{t-1}}(\Y)$, the isotonic regression of $\Y$ onto $\X u_{t-1}$, using PAVA.
\State $\bullet$ Take an orthogonal gradient-like step, 
\[\tilde{u}_t = u_{t-1} + \eta \cdot \prp{u_{t-1}}\left(n^{-1}\X^\top \big(\Y - \iso_{\X u_{t-1}}(\Y)\big)\right).\]
\State $\bullet$  Enforce sparsity and unit norm,
\[u_t = \frac{\HT(\tilde{u}_t)}{\norm{\HT(\tilde{u}_t)}_2}.\]
\until{some convergence criterion is reached.}
\end{algorithmic}
\end{algorithm}

For our theoretical guarantee to hold, we will see that the target sparsity level $s$ needs to be  sufficiently large relative to the true sparsity $s_\star$ of $u_\star$. Under mild assumptions, our theory will guarantee that the iterations of this algorithm converge to $u_\star$ up to an error level that is $\mathcal{O}(s^{1/2}\log(np)/n^{1/3})$.

The iterative framework of this algorithm is similar to the CSI method of \citet{ganti2017}. A key difference is that, in the $g_\star$ estimation step, the CSI method enforces a Lipschitz constraint in addition to the monotonicity constraint, which requires choosing the Lipschitz constant in advance; whereas our method only uses isotonic regression to enforce monotonicity. In addition, no theoretical guarantee is given for CSI. Our gradient update step also has some similarities to the least squares method of \citet{balabdaoui2019}; however, their algorithm works explicitly to minimize an objective function $\norm{\Y - \iso_{\X u}(\Y)}^2_2$, while our SOD-SIM algorithm recovers $u_\star$ through an iterative procedure. The descent direction is not the gradient of least square objective exactly, but a modified version for computational simplicity. The orthogonal projection step enforce a steady ``gradient-like" decending rate. Empirically in many cases the algorithm performance is similar without the orthogonal projection step; but in some cases when the initialization $\inner{u_0}{u_\star}$ is close to $0$, the orthogonal projected algorithm has better performance. The orthogonal projection step is also important in solving the technical issue from the normalization step in the proof.

\subsection{Convergence guarantees for fixed design $\X$}
We first provide upper bounds of convergence rates for the estimation of $u_\star$ and $g_\star$ when $\X$ is fixed. We begin with our assumptions. 
 
\subsubsection{Assumptions}\label{sec:assumptions}
We assume that
\begin{equation}\label{eqn:model}
\Y = g_\star(\X u_\star) + \Z,~\text{where}~ \X\in \R^{n\times p}, \Y \in \R^n, \Z\in \R^n, \end{equation}
and we place the following assumptions on the underlying function $g_\star$, parameters $u_\star$, design matrix $\X$ and noise $\Z$.

\paragraph{Assumptions on the signal}
We assume $g_\star$ is a bounded, Lipschitz, and monotone non-decreasing function, i.e. for $t_1>t_0$,
\begin{equation}\label{eqn:lipschitz} 0 \leq g_\star(t_1) - g_\star(t_0) \leq L (t_1 - t_0),\textnormal{ and }g_\star(\cdot): \R\rightarrow [-B,B],\end{equation}
while the true parameter vector $u_\star$ is a sparse unit vector,
\begin{equation}\label{eqn:sparsity}\norm{u_\star}_2=1\textnormal{ and }
\left|\textnormal{Support}(u_\star)\right|\leq s_\star.\end{equation}

For the design matrix $\X$, we assume  upper-bounded sparse eigenvalues,
\begin{equation}
\label{eqn:sparse_eig}\frac{1}{n}\norm{\X u}^2_2\leq \beta\norm{u}^2_2 ~ \textnormal{ for all $(2s+s_\star)$-sparse $u\in\R^p$.}
\end{equation}
For the corresponding lower bound, we require an identifiability condition, which is slightly stronger than the usual restricted eigenvalue type condition. This is because we are working with a substantially larger class of models than the usual parametric regression setting. In particular, there exists $\eps_n >0$ such that,
\begin{multline}\label{eqn:ID}\textnormal{for any $s$-sparse unit vector $u$ with $\norm{u-u_\star}_2\geq \eps_n$}\\\textnormal{and any monotone non-decreasing $g$,\quad }\frac{1}{L^2n}\norm{g(\X u) - g_\star(\X u_\star)}^2_2 \geq \alpha\norm{u-u_\star}^2_2. \end{multline}  
This condition effectively 
ensures that we cannot reproduce the true regression function $g_\star(\X u_\star)$ with some other sparse vector $u\neq u_\star$ without any loss in accuracy, unless $\norm{u-u_\star}_2$ is very small.
In particular, comparing assumptions~\eqref{eqn:lipschitz},~\eqref{eqn:sparse_eig}, and~\eqref{eqn:ID}, we can see that we must have
$\alpha \leq \beta$. To help interpret the parameters in this assumption, we should think of $\alpha$ and $\beta$ as constants (expressing properties of the function $g_\star$ that are constant regardless of sample size), while $\eps_n$ is vanishing as $n\rightarrow \infty$ (i.e., the identifiability condition will hold for vectors $u$ increasingly close to $u_\star$, as sample size $n$ increases). Later in Section \ref{sec:conv_randX}, we will show that \eqref{eqn:sparse_eig} and \eqref{eqn:ID} are satisfied with high probability when $X$ is a mixture of Gaussian distributions.

\paragraph{Assumptions on the noise}
For $\Z$, we assume for all $1\leq i\leq j\leq n$, $Z_i$'s are independent, with
\begin{multline}\label{eqn:noise_assump}
\EE{Z_i}=0~
\textnormal{and $\EE{e^{t Z_i}}\leq e^{t^2\sigma^2/2}$ for all $t \in \mathbb{R}$,}\\ \text{and}~ \sigma_{\max}^2\geq \VV{Z_i} = \sigma^2_i \geq\sigma^2_{\min}>0\textnormal{ and }
\left|\sigma_i - \sigma_j\right| \leq L_\sigma\cdot \frac{j-i}{n}.
\end{multline}



\subsubsection{Convergence guarantee for the estimation of $u_\star$}

\begin{theorem}\label{thm:converge_main}

Assume the conditions~\eqref{eqn:model},~\eqref{eqn:lipschitz},~\eqref{eqn:sparsity}, \eqref{eqn:sparse_eig}, and~\eqref{eqn:ID} hold for the signal,
and condition~\eqref{eqn:noise_assump} holds for the noise.
Assume we run the algorithm with step size $\eta > 0$ and working sparsity level $s\geq 1$ satisfying
\begin{equation}\label{eqn:eta_s}  \eta \leq \frac{1}{L\beta}, \quad \frac{s}{s_\star} \geq c_s > \left(\frac{1}{\alpha\eta L}\right)^2.\end{equation}
For $\delta >0$, assume furthermore that
\begin{equation}\label{eqn:s_star_growth}\frac{s_\star\log(4p/\delta)}{n}\leq \frac{\sigma\alpha^2 L^2}{2\beta}.\end{equation}
Then, with probability at least $1-\delta$, it holds for all $t\geq 0$ that
\begin{equation}\label{eqn:u_bound}
\norm{u_t-u_\star}_2\leq  \sqrt{2}\cdot r^t + C\left(\eps_n + \frac{s^{1/2}\log(np/\delta)}{n^{1/3}}\right),
\end{equation}
where
\[r = \left(\frac{1-\alpha\eta L}{1-\sqrt{s_\star/s}}\right)^{1/2} < 1,\]
and where the constant $C$ (specified in the proof)
depends on $\alpha,\beta,L,B,\sigma,\eta, c_s$, but not on $n,p,s,\delta$ and $\epsilon_n$.

\end{theorem}

This theorem provides information about both computation and statistical convergence in estimating $u_\star$. Define \[\Delta = C\left(\eps_n + \frac{s^{1/2}\log(np/\delta)}{n^{1/3}}\right).\] 
Then, from a computation perspective, the algorithm converges linearly up to the time when it reaches the statistical error $\Delta$. In particular, for any tolerance $\tau > \Delta$, running the algorithm for $t \geq \frac{\log\left(\frac{\tau - \Delta}{\sqrt{2}}\right)}{\log(r)}$ many iterations will lead to $\norm{u_t - u_{\star}}^2_2 \leq \tau$ (with probability at least $1-\delta$). 

Next, we consider the dependence on the sample size $n$. Suppose we assume $\epsilon_n \leq \mathcal{O}(n^{-1/3})$ (as we will see in Proposition~\ref{prop:converge_normal_mix} below, this holds with high probability under a random model for $\X$). Then $\Delta$ scales with sample size $n$ as $n^{-1/3}$ (up to log factors). \citet{balabdaoui2019} also obtained $n^{-1/3}$  consistency of a least square estimator of $u_\star$ under similar assumptions. With  $g_\star$ being monotone increasing and twice continuously differentiable, and some further assumptions on the $u_\star$ and $\X$, \citet{balabdaoui2018} proposed a score based estimator which is $\sqrt{n}$-consistent, i.e., error scales with sample size as $n^{-1/2}$. 

Comparing these results, an open question remains regarding the convergence rate of our algorithm in settings where the true $g_\star$ is smooth --- the $n^{-1/2}$ rate achieved by \citet{balabdaoui2018} with slightly stronger assumptions suggests that perhaps our $n^{-1/3}$ rate can be improved with an additional smoothness assumption. In Section \ref{sec:sim_lb} below, we examine this open question empirically, and will see that while the $n^{-1/3}$ rate appears to be tight for a non-smooth $g_\star$, simulation results with a smooth $g_\star$ suggest that an improved rate might be possible in that setting.

\subsubsection{Convergence guarantee for the estimation of $g_\star$}

We show that with the SOD-SIM algorithm in \ref{sec:Alg}, the estimation of $g_\star$ has convergence rate of $\mathcal{O}(\epsilon_n + \frac{s^{1/2\log(np)}}{n^{1/3}})$. In particular, assuming $\epsilon_n \leq O(n^{-1/3})$, the estimation of $g_\star$ has $\mathcal{O}(n^{-1/3})$ convergence rate omitting the log terms.

\begin{proposition}\label{thm:Pred_err}
Assume the conditions~\eqref{eqn:model},~\eqref{eqn:lipschitz},~\eqref{eqn:sparsity}, \eqref{eqn:sparse_eig}, and~\eqref{eqn:ID} hold for the signal,
and condition~\eqref{eqn:noise_assump} holds for the noise. For $\delta>0$, assume \eqref{eqn:eta_s} and \eqref{eqn:s_star_growth} hold for the step size $\eta$, sparsity levels $s$ and $s_\star$, and $n$, $p$. Then as $n$ is sufficiently large, with probability at least $1-\delta$, for all $t \geq 1$, the prediction error is bounded by:
\begin{multline}\label{eqn:Pred_err}
n^{-1/2} \norm{\iso_{\X u_t}\Y - g_\star(\X u_\star)}_2 \leq  L\sqrt{2\beta}\cdot r^t +C'\left(\eps_n + \frac{s^{1/2}\log(np/\delta)}{n^{1/3}}\right),\end{multline}
where $r$ is defined in Theorem \ref{thm:converge_main}, and where the constant $C'$ (specified in the proof)
depends on $\alpha,\beta,L,B,\sigma,\eta,c_s$, but not on $n,p,s,\delta$ and $\epsilon_n$.
\end{proposition}

The proof for Proposition \ref{thm:Pred_err} is deferred to section \ref{pf:Pred_err}. We show our estimator $\iso_{\X u_t}\Y$ of $g_\star$ converges at a $n^{-1/3}$ rate in $\ell_2$ norm. The upper bound of the convergence rate for estimating $g_\star$ matches its lower bound up to the log factors, as we know the minimax rate for isotonic regression is $n^{-1/3}$ \cite{chatterjee2015}. This result matches the convergence results shown in \cite{balabdaoui2019} and \cite{balabdaoui2018}.

\subsection{Convergence guarantees with random design $\X$}\label{sec:conv_randX}
In this section we verify that the  assumptions in section \ref{sec:assumptions} are likely to hold for random design matrices $\X$. In particular, we show the convergence rate for random design $\X$ with normal mixture distribution, which is a much milder condition than the symmetric elliptical $\X$.

\subsubsection{Assumptions}\label{sec:assumption_rand}
We assume the same model \eqref{eqn:model} as in section \ref{sec:assumptions}. We have the following assumptions on $g_\star(\cdot)$, $\X$, $u_\star$ and $\Z$ under random design matrix $\X$.
\paragraph{Assumptions on the signal}
Assume $g_\star(\cdot)$ is bounded Lipschitz and monotone nondecreasing \eqref{eqn:lipschitz}, and $u_\star$ is a sparse unit vector \eqref{eqn:sparsity}. The rows of $\X$ are \iid~draws from a distribution on $X\in\R^p$, and $g_\star(\cdot)$, $X$ and $u_\star$ satisfy that \begin{multline}\label{eqn:randomX_normal} X\sim \sum_{k=1}^K a_k\cdot \mathcal{N}(\mu_k,\Sigma_k),
\text{~i.e.~a mixture-of-normals distribution,}\\ \text{ with $\sum_{k=1}^K a_k =1$, $\norm{\mu_k}_2\leq V$ and $c_0\ident_p\preceq \Sigma_k\preceq c_1\ident_p$ for all $k=1,\dots,K$.} \end{multline} and that \begin{equation}\label{eqn:randomX_var} \VV{g_\star(X^\top u_\star)}\geq \nu^2>0,\end{equation}

\paragraph{Assumptions  on  the  noise} Conditional on $\X$, the noise $\Z$ is subgaussian with scale $\sigma$,
\begin{multline}\label{eqn:subg_randomX}\text{ i.e.~$\EEst{e^{\inner{v}{\Z}}}{\X} \leq e^{\sigma^2\norm{v}^2_2/2}$ holds~}\text{almost surely over $\X$, for any fixed $v\in\R^n$.}\end{multline}

\subsubsection{Convergence guarantees}

\begin{proposition}\label{prop:converge_normal_mix}
Under assumptions \eqref{eqn:model},  \eqref{eqn:lipschitz}, \eqref{eqn:sparsity},  \eqref{eqn:randomX_normal}, \eqref{eqn:randomX_var},
and~\eqref{eqn:subg_randomX}. For $\delta>0$, 
there exists $\epsilon_n \geq C_\alpha \cdot\sqrt{\frac{s\log(np/\delta)}{n}}$ and constants $C_\alpha$, $\alpha$, $\beta$ depending on $L,c_0,c_1,B,\nu,V$ but not on $n,p,s,\delta$; as n is sufficiently large, with probability at least $1-\delta$, conditions \eqref{eqn:sparse_eig} and \eqref{eqn:ID} hold.

\end{proposition}

Proposition \ref{prop:converge_normal_mix} verifies that under the random $X$ setting proposed in section \ref{sec:assumption_rand}, conditions \eqref{eqn:sparse_eig} and \eqref{eqn:ID} are satisfied with high probability. This implies that when $\X$ has a normal mixture distribution, with high probability, the results of Theorem \ref{thm:converge_main} still apply, that our proposed SOD-SIM algorithm leads to $n^{-1/3}$ convergence rate in $\ell_2$ norm for the estimation of the index $u_\star$. The proof is deferred in section \ref{pf:rand_X}.

\subsection{Proof of Theorem~\ref{thm:converge_main}}\label{sec:pf_conv-fix}

We will now prove our convergence theorem. The detailed proofs for the lemmas are presented later in Section \ref{add_pf}. 

First, we give two deterministic results, that will use our assumptions~\eqref{eqn:model},~\eqref{eqn:lipschitz},~\eqref{eqn:sparsity}, \ \eqref{eqn:sparse_eig}, and~\eqref{eqn:ID} on the signal (i.e., on $\X$, $g_\star$, and $u_\star$), but hold for any {\em fixed} noise vector $\Z$. Define 
\[\textnormal{Err}_\infty(\Z) = n^{-1}\norm{\X^\top(\Z - \bar{Z}\ones_n)}_{\infty},\]
where $\bar{Z} = \frac{1}{n}\sum_{i=1}^n Z_i$,
and
\[\textnormal{Err}_{\textnormal{iso}}(\Z) = \sup_{u\in\mathbb{S}^{p-1}_s} \left\{n^{-1/2}\norm{\iso_{\X u}(g_\star(\X u_\star )+\Z) - \iso_{\X u}(g_\star(\X u_\star))}_2\right\},\]
where 
\[\mathbb{S}^{p-1}_s = \left\{u\in\R^p : \norm{u}_2=1,\ \textnormal{$u$ is $s$-sparse}\right\}\]
is the set of $s$-sparse unit vectors in $\R^p$.

First, we verify that the initialization $u_0$ defined in~\eqref{eqn:u_0} is well-defined (i.e., $\X^\top(\Y-\bar{Y}\ones_n)\neq 0$), and is not worse than a random guess, meaning that we have $\inner{u_0}{u_\star}\geq 0$.
\begin{lemma}\label{lem:init} 
Assume the conditions~\eqref{eqn:model},~\eqref{eqn:lipschitz},~\eqref{eqn:sparsity},~\eqref{eqn:sparse_eig}, and~\eqref{eqn:ID} hold.
If $s\geq s_\star \cdot \frac{\beta}{\alpha}$
and
\[ \textnormal{Err}_{\infty}(\Z) \leq \frac{\alpha L}{\sqrt{s_\star}},\]
then the initialization $u_0$ defined in~\eqref{eqn:u_0} is well-defined, and
satisfies
\[\inner{u_0}{u_\star}\geq 0.\]
\end{lemma}

Next, we prove a bound on the iterative update step of the algorithm.
\begin{lemma}\label{lem:iter_update_deterministic}
Assume the conditions~\eqref{eqn:model},~\eqref{eqn:lipschitz},~\eqref{eqn:sparsity},~\eqref{eqn:sparse_eig}, and~\eqref{eqn:ID} hold. Fix any $u\in\mathbb{S}^{p-1}_s$ satisfying $\inner{u}{u_\star}\geq 0$, and fix any step size $\eta \in[0, \frac{1}{L\beta}]$ and any sparsity level $s>s_\star$. 
Define a hard-thresholded update step as
\[\check{u} = \frac{\HT(\tilde{u})}{\norm{\HT(\tilde{u})}_2}\textnormal{ where }
\tilde{u} = u +\eta\cdot  \prp{u}\left(\tfrac{1}{n}\X^\top\big(\Y - \iso_{\X u}(\Y)\big)\right).\]
Then
\begin{equation}\label{eqn:iter_update_deterministic}
 \norm{\check{u} - u_\star}_2\leq \left(\frac{1-\alpha\eta L}{1-\sqrt{s_\star/s}}\right)^{1/2} \cdot \norm{u-u_\star}_2  + \textnormal{Remainder}(\Z),\end{equation}
 where
\[\textnormal{Remainder}(\Z) = \eps_n \left(\frac{1}{1-\sqrt{s_\star/s}}\right)^{1/2}+  \frac{2\eta\left(  \sqrt{2s+s_\star} \cdot \textnormal{Err}_\infty(\Z) + \sqrt{\beta} \cdot  \textnormal{Err}_\iso(\Z)\right)}{ 1 - \sqrt{s_\star/s}}.\]
\end{lemma}

Combining these two lemmas proves the following deterministic convergence result:
\begin{lemma}\label{lem:converge_deterministic}
Assume the conditions~\eqref{eqn:model},~\eqref{eqn:lipschitz},~\eqref{eqn:sparsity},~\eqref{eqn:sparse_eig}, and~\eqref{eqn:ID} hold. Fix a step size $\eta \geq0$ and a sparsity level $s\geq 1$ satisfying
\[\eta \leq \frac{1}{L\beta}, \quad s > s_\star \cdot \left(\frac{1}{\alpha\eta L}\right)^2.\]
Assume also that
\[ \textnormal{Err}_{\infty}(\Z) \leq \frac{\alpha L}{\sqrt{s_\star}}.\]
Then for all $t\geq 0$, it holds that
\[\norm{u_t - u_\star}_2\leq \sqrt{2}\cdot r^t + \frac{\textnormal{Remainder}(\Z)}{1-r},\]
where
\[r = \left(\frac{1-\alpha\eta L}{1-\sqrt{s_\star/s}}\right)^{1/2}\]
and where $\textnormal{Remainder}(\Z)$ is defined as in Lemma~\ref{lem:iter_update_deterministic}.
\end{lemma}
\begin{proof}[Proof of Lemma~\ref{lem:converge_deterministic}]
We will prove the lemma by induction.
We will show that, for each $t\geq 0$, it holds that
\begin{equation}\label{eqn:converge_deterministic_induction}
\norm{u_t - u_\star}_2 \leq \sqrt{2}\cdot r^t + \textnormal{Remainder}(\Z) \cdot \sum_{s=1}^t r^{s-1}.\end{equation}
At $t=0$, we have
\[\norm{u_0 - u_\star}^2_2 = \norm{u_0}^2_2 +\norm{u_\star}^2_2 - 2\inner{u_0}{u_\star}\leq 2,\]
since by Lemma~\ref{lem:init} we know that $u_0,u_\star$ are unit vectors satisfying $\inner{u_0}{u_\star}\geq 0$. Therefore~\eqref{eqn:converge_deterministic_induction} holds at $t=0$. Now suppose~\eqref{eqn:converge_deterministic_induction} holds at $t=T\geq 0$. We then have
\begin{multline*}\norm{u_{T+1}-u_\star}_2 \leq r\cdot \norm{u_T-u_\star}_2 + \textnormal{Remainder}(\Z)\\ 
\leq r\cdot \left(\sqrt{2} \cdot r^T + \textnormal{Remainder}(\Z) \cdot \sum_{s=1}^T r^{s-1}\right)+ \textnormal{Remainder}(\Z),\end{multline*}
where the first step holds by Lemma~\ref{lem:iter_update_deterministic} and the second step applies~\eqref{eqn:converge_deterministic_induction} with $t=T$. This proves that~\eqref{eqn:converge_deterministic_induction} holds with $t=T+1$, completing the proof.
\end{proof}

With these deterministic results in place, we now need to bound $\textnormal{Err}_{\infty}(\Z)$ and $\textnormal{Err}_{\iso}(\Z)$, under our assumptions on the noise $\Z$.
\begin{lemma}\label{lem:bound_err_terms}
Assume the conditions~\eqref{eqn:model},~\eqref{eqn:lipschitz},~\eqref{eqn:sparsity},~\eqref{eqn:sparse_eig}, and~\eqref{eqn:ID} on the signal, and~\eqref{eqn:noise_assump} on the noise. For any $\delta>0$, with probability at least $1-\delta$ it holds that
\[\textnormal{Err}_\infty(\Z)\leq\sigma\sqrt{\frac{2\beta \log(4p/\delta)}{n}}\]
and
\[\textnormal{Err}_\iso(\Z)\leq n^{-1/3}\left(2B + \sigma\sqrt{8\log n}\cdot \sqrt{\log\left(\frac{3n^{2s+1}p^s}{\delta}\right)}\right).\]
\end{lemma}

Combining Lemma~\ref{lem:converge_deterministic} with Lemma~\ref{lem:bound_err_terms}, we have proved Theorem~\ref{thm:converge_main} with the constant
\[C = \max\left\{\frac{r/(1-r)}{\sqrt{1-\alpha \eta L}}, \frac{r^2/(1-r)}{1-\alpha \eta L}\cdot \eta\sqrt{\beta}\left(7 +4B+ 13\sigma\right)\right\}.\]

\section{Experiments}

In this section, we first use simulation experiments to demonstrate the performance of the SOD-SIM algorithm in estimating $u_\star$, and explore the lower bound for the convergence rate; then we apply the SOD-SIM algorithm to a classification problem with PU rocker protein data.

\subsection{Simulation studies}

\subsubsection{Performance in estimating $u_\star$ for PU data}

In this section, we study the performance of the SOD-SIM algorithm in estimating $u_\star$ for PU data without knowing the prevalence $\pi$. 

\paragraph{Methods} We simulate PU data with $400$ positive data and $400$ unlabeled data. Samples in the population are independent and for individual $i$, $X_i \in \R^{p} \sim \normal (0, \Sigma_{\rho})$, where $\Sigma_\rho$ is the autoregressive matrix with its $i,j-$th entry being $\rho^{|i-j|}$, where we let $\rho=0.2$. Next, let $Y_i | X_i \sim \text{Bernoulli}\left(g(X_i^\top u_\star)\right)$, where $g(t) = \frac{e^t}{1+e^t}$ is the expit function, $u_\star = (\frac{\sqrt{2}}{2}, -\frac{\sqrt{2}}{2},0,\cdots,0) \in \R^p$ is with sparsity level $s_\star=2$ and we vary $p = 100, 400, 800, 1600$.

We compare the performance of our proposed SOD-SIM in estimating $u_\star$ with the logistic model with $\ell_1$ penalized maximum likelihood (sparseLR) method. For the sparseLR, we choose the tuning parameter using cross validation. For the SOD-SIM algorithm, we let the working sparsity $s=10$ and set the learning rate $\eta=0.1$. We simulate for $M=100$ times.

\paragraph{Results} The performance is shown in Figure \ref{fig:simulation}. Since $u_\star$ is only identifiable up to direction, both $u_\star$ and the estimations $\widehat{u}$ are rescaled to have unit norms. Besides bias, standard deviation (SD) and rooted mean square error (RMSE), we also characterize the mean inner product of $u_\star$ and $\widehat{u}$ as their correlation. We can see that for all settings with different $p$, the SOD-SIM method has smaller bias and SD than the sparseLR method and the correlation from SOD-SIM with $u_\star$ is closer to $1$. With the dimension of the covariates $p$ increases, the performance of the sparseLR method becomes worse, whereas the SOD-SIM method remains a good performance. The results shows that the mis-specification of the link function and the non-symmetric covariate distribution of the PU data affects the estimation of $u_\star$ using the parametric sparseLR method. The proposed SOD-SIM method performs well for the PU data without specifying the prevalence $\pi$.  

\begin{figure}
    \centering
    \includegraphics[scale=0.9]{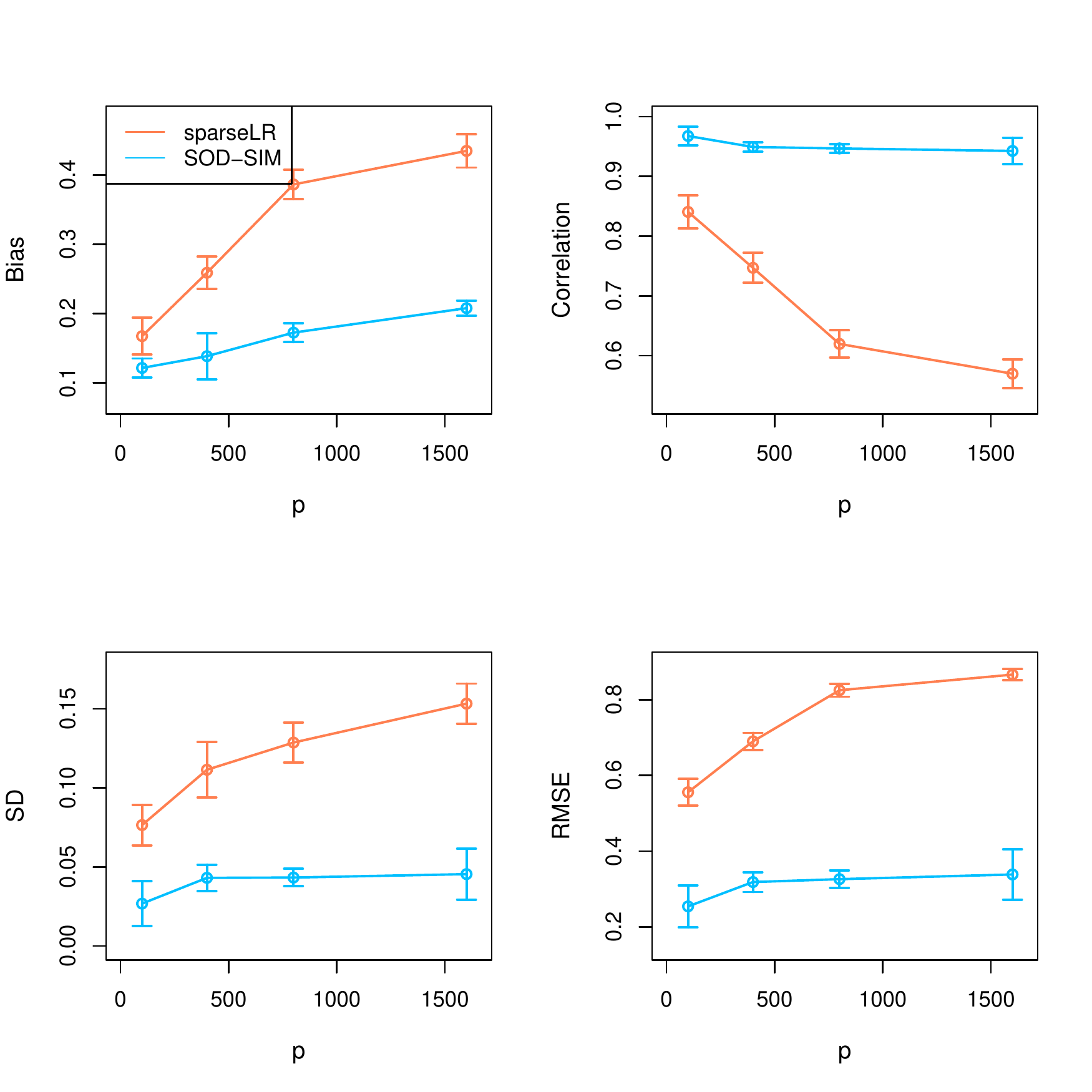}
    \caption{Empirical performance of the SOD-SIM and sparseLR algorithms on PU data in terms of bias, SD, correlation and RMSE with their $95\%$ confidence intervals.}
    \label{fig:simulation}
\end{figure}
 
 \subsubsection{Lower bound exploration}\label{sec:sim_lb}
In this section we study whether our convergence rate for $\norm{u_t - u_\star}_2$ is tight in terms of its dependence on the sample size $n$. We are interested in two questions: (1) Is the upper bound from Theorem \ref{thm:converge_main} tight for the SOD-SIM algorithm? (2) Does the SOD-SIM algorithm attain the optimal convergence rate of the global minimizer? 

\paragraph{Methods}
We vary the sample size $n = 100, 150, 200, 300, 400, 600, 800, 1200$; and construct two low-dimensional examples with $s_\star=2$, $p=2$, $u^\star = (\frac{\sqrt{2}}{2}, \frac{\sqrt{2}}{2})$. We let $\Y = g_n(\mathbf{X}u^\star) + \Z$ where $\Z \sim \normal(0,\sigma^2 I)$. In Example 1, $\sigma =1$ and in Example 2, $\sigma=0.8$. 

We first construct the $g_n(x)$ function. In Example 1, we construct $g_n(x)$ of which the second derivative is unbounded as $n \rightarrow \infty$. Specifically, we have 
\begin{equation}\label{eqn:gn1} g_n(x) = x - \frac{n^{-1/3}f(n^{1/3}x)}{2+\epsilon}~ \text{where}~ f(x) = 2(x-\lfloor x\rfloor) - 4(x-\lfloor x\rfloor -1/2)_+,\end{equation} where we set $\epsilon = 0.1$. Notice that in this construction, $g_n''(x) \propto  n^{1/3} f''(n^{1/3}x) \propto n^{1/3}$.

In Example 2, we construct a smoother version of $g_n(x)$ by letting
 \begin{equation}\label{eqn:gn3} g_n(x) = x - \frac{n^{-2/3}f(n^{1/3}x)}{2+\epsilon}\end{equation} where we set $\epsilon = 0.1$. For this example, $g_n(x)$ has bounded limiting second derivative as $n\rightarrow \infty$. These two example functions are plotted in Figure \ref{fig:sim}.  

To construct the covariates, for each sample, we first generate $t_i \sim \textnormal{Unif}[0,1]$ and we define $X_i$ as 
\begin{align*} & X_{1i} =\frac{\sqrt{2}}{\textnormal{sd}_1} n^{-1/3}f(n^{1/3} t_i),\\& X_{2i} =\frac{\sqrt{2}}{\text{sd}_2} ( t_i - n^{-1/3}f(n^{1/3} t_i)) + \sqrt{2} \varepsilon_{i} ,\end{align*} 
for $i=1,\cdots,n$, where $\varepsilon_{i} \sim \text{Unif}[-0.5,0.5]$ are i.i.d. and independent of $X_{1i}$. The constants $\text{sd}_1$ and $\text{sd}_2$ are used to make $X_1$ and $X_2$ to have roughly equal variances. In particular, in Example 1, we let $\text{sd}_1=0.025$ and $\text{sd}_2=0.4$, and in Example 2, we let $\text{sd}_1=0.0014$ and $\text{sd}_2=0.4$. 


We compute the global minimizer of the $\ell_2$ loss. For $\theta \in [0, \pi/2]$, with increment $0.001$, we compute $\norm{\Y - \textnormal{iso}_{\mathbf{X}u_{\theta}}(\Y)}_2$, where $u_{\theta} = (\cos \theta, \sin \theta)$, to obtain the minimizer $\hat{u}_g$ of the $\ell_2$ loss.

\paragraph{Results}
\begin{figure}
\begin{center}
\includegraphics[scale=0.9]{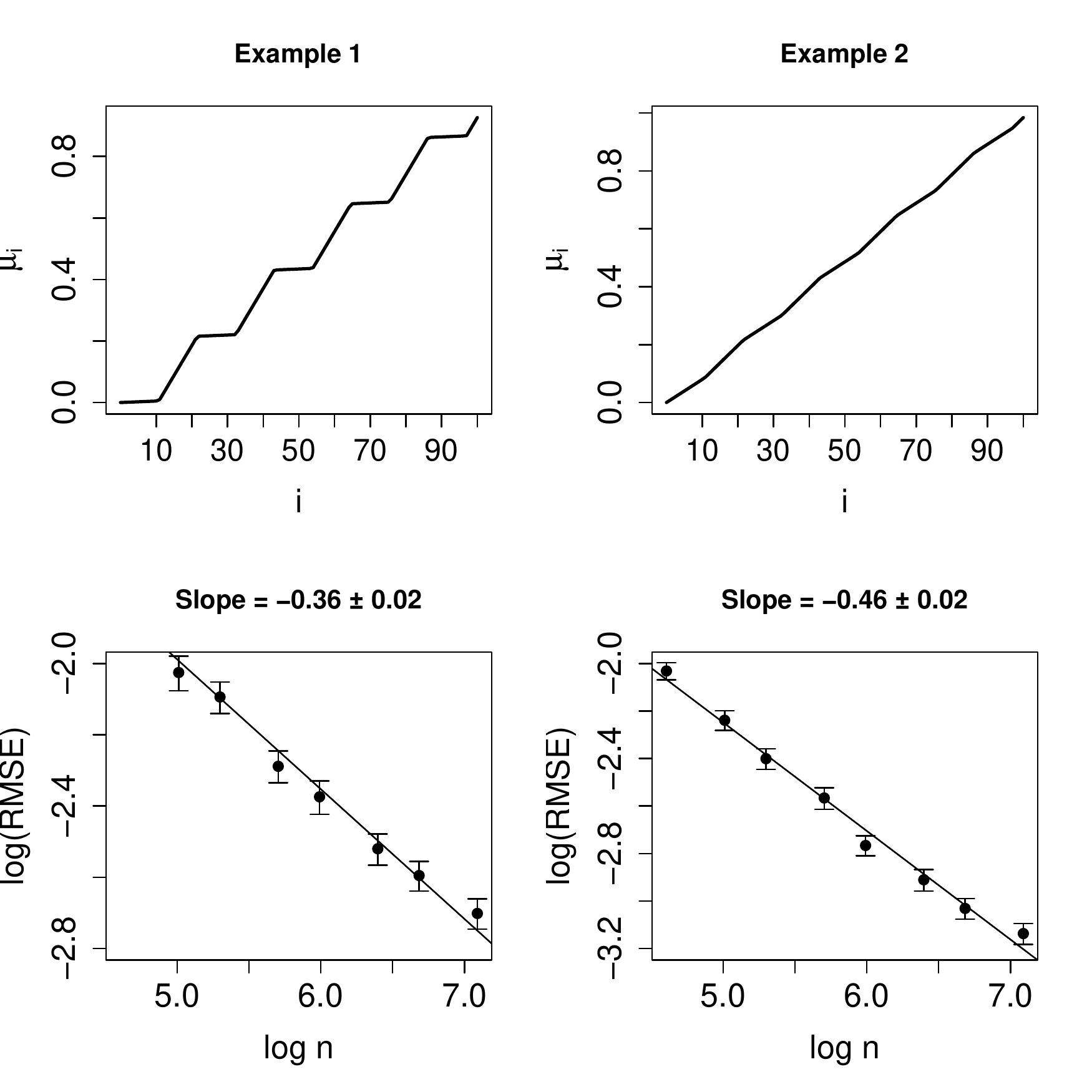}
\end{center}
\caption{Upper: $g_n$ from \eqref{eqn:gn1} (left) and \eqref{eqn:gn3} (right). Lower: $\log(\text{RMSE})$ (with 95\% confidence interval) vs $\log(n)$ for the global estimator for Example 1 \eqref{eqn:gn1} (left) and Example 2 \eqref{eqn:gn3} (right).}
\label{fig:sim}
\end{figure}

We plot the logarithm of rooted mean square error ($\log(\text{RMSE})$) against $\log(n)$ to show the rate of convergence. Example 1 (non-smooth example) has a convergence rate close to $n^{-1/3}$, which indicates our proved convergence rate for the SOD-SIM algorithm is as tight as the global minimizer. The second example (with bounded limiting second derivative) has a faster convergence rate, which is close to $n^{-1/2}$. 

Our exploratory simulation results match with the reported convergence rate of the score based estimator in \cite{balabdaoui2018}. We do not have a definitive answer for the convergence rate of our algorithm when the function $g_\star$ is smooth. A better rate could potentially be achieved with alternative proof techniques, and is beyond the scope of this work.

\subsection{Application to rocker protein data}	
In this section, we study an application of our SOD-SIM algorithm to the rocker protein sequence data. The rocker data is composed of sequences with confirmed function ($Y=1$) and sequences with unknown functionality ($Y=0$). 

To give some biological context to this dataset, Rocker is a \emph{de novo} designed protein which recreates the biological function of substrate transportation \citep{Joh2014-xy}. A functional protein successfully transports ions across cell membranes. To study how a mutation at each site affects the membrane transport ability of proteins, mutations are introduced to the original Rocker sequence (wild-type) by double site saturation mutagenesis. 
Mutated sequences are screened by fluorescence-activated cell sorting (FACS), which sort out \emph{functional} protein variants. Due to an experimental challenge of obtaining negative sequences, an additional set of sequences are obtained from the initial library whose associated functionality is unknown. Therefore, the resulting data set is a Positive-Unlabeled (PU) dataset, because the first set consists of functional sequence variants and the second set consists unlabeled examples.

\paragraph{Methods} A sequence consists of 25 positions taking one of the $21$ discrete values, which correspond to 20 amino acid letter codes plus an additional letter for the alignment gap. Each sequence contains at most two mutations. The data consists of $n_\ell = 703030$ functional (positive label) and $n_u = 1287155$ unlabeled sequences. To compare different algorithms reliably, we split the data into 10 subsets, and use half of each split for training and remaining half for testing. Each training dataset contains 35K labeled and 64K unlabeled examples on average.  We generate features with main (site-wise) and pairwise (interaction between two sites) effects using one-hot encoding of the sequences. Removing columns with zero counts, we obtain 27K features on average, where 490 of the generated features correspond to the main effects. We take the amino acid levels in the WT sequence as the baseline levels to generate a sparse design matrix $\mathbf{X} \in \mathbb{R}^{n\times p}$ where $(n,p) \approx$ (100K, 27K). The number of unique sequences is around 26K (i.e., rank of row space of $\mathbf{X}\approx$ 26K), which makes the problem high-dimensional. The response vector $\mathbf{Y}\in \mathbb{R}^{n}$ represents whether each sequence $i$ is labeled ($\mathbf{Y}_i =1$) or unlabeled ($\mathbf{Y}_i =0$). 

We applied four methods to each train dataset which we denote as follows:
\begin{itemize}
	\item SOD-SIM: our proposed algorithm
	\item sparseLR: the logistic regression with $\ell_1$-penalty
	\item PV1: the proposed method in \cite{Plan2013-ew}. We solve the following objective:
$$ \hat{u}^{PV1}  \in \arg \max_{u \in K(s)} \langle y, \mathbf{X} u\rangle$$
where $K(s)$ is a $s$-approximate sparse set, defined as $K(s) := \{x\in \mathbb{R} : \|x\|_2 \leq 1,  \|x\|_1\leq \sqrt{s}\}$
    
	\item PV2: the proposed method in \cite{Plan2016-cz}. 
$$ \hat{u}^{PV2}  \in \arg \min_{u \in K(s)} \|y-\mathbf{X} u \|_2. $$
\end{itemize}

We used the \textbf{glmnet} package to solve the logistic regression objective with $\ell_1$-penalty. To solve objectives in PV1 and PV2, we implemented the projected gradient methods, where we iteratively projected the gradients onto the intersection of $\ell_1$ and $\ell_2$ balls. We used the Dykstra's projection algorithm to obtain the projection onto the intersection of the two balls \citep{Boyle1986-us}. Note both objectives in PV1 and PV2 are convex, and therefore the projected gradient descent algorithm guarantees to find a global minimum. For PV1 and PV2, we input the standardized $\mathbf{X}$ where each column is centered and scaled. Since sparseLR, PV1, and PV2 are convex problems, we run until the algorithms the models converge. For SOD-SIM, we let the learning rate $\eta=1$ and run the algorithm until changes of estimated parameters is small ($< 0.0005$) or the iteration number $t$ reaches the pre-defined maximum number of iterations ($\leq 1000$). In addition, PV1 and PV2 methods do not provide estimates of the link function. We run an isotonic regression after obtaining $\hat{u}^{PV1}$ and $\hat{u}^{PV2}$ to estimate the link function, and used such estimates to perform downstream prediction tasks. 

We used four metrics (Accuracy, F1 score, Brier Score, and AUC value) to evaluate predictive performance of the four methods. Accuracy is the proportion of correctly classified examples. F1 score is the harmonic mean of the precision and recall, whose value lies between 0 and 1. Higher number corresponds to a better performance. Brier Score is an average squared $\ell_2$ loss, i.e., Brier Score := $\frac{1}{n_{test}} \sum_{i=1}^{n_{test}} (y_i - \hat{y}_i)^2$, and therefore, lower numbers correspond to better performances. AUC measures the area under the ROC curve. The perfect classification corresponds to the AUC value of 1, and a random classification corresponds to 0.5. For the choice of hyperparameters ($s$ for SOD-SIM, PV1, and PV2, and $\lambda$ for the $\ell_1$ logistic regression), we use a grid of 100 working sparsity $s$ values from 1 to $p$, interpolated in a square root scale, and a grid of 100 lambda values obtained from the \textbf{glmnet} package by default. We picked the hyperparameters that gave the best results for the most of the metrics on the test datasets. 

\paragraph{Results} Figure \ref{fig:fig1} demonstrates the empirical performances of the four methods. SOD-SIM performed the best in all metrics on average. The predictive performance of the sparse logistic regression was slightly worse than the SOD-SIM. It is likely due to the mis-specification error, since we are forcing the link function to be a sigmoid function. Both PV1 and PV2 seem to suffer from the deviation $\mathbf{X}$ from the Gaussian design; however, PV2 seems to be more robust to such deviation.

\begin{figure}[ht]
\centering
\includegraphics[scale=0.09]{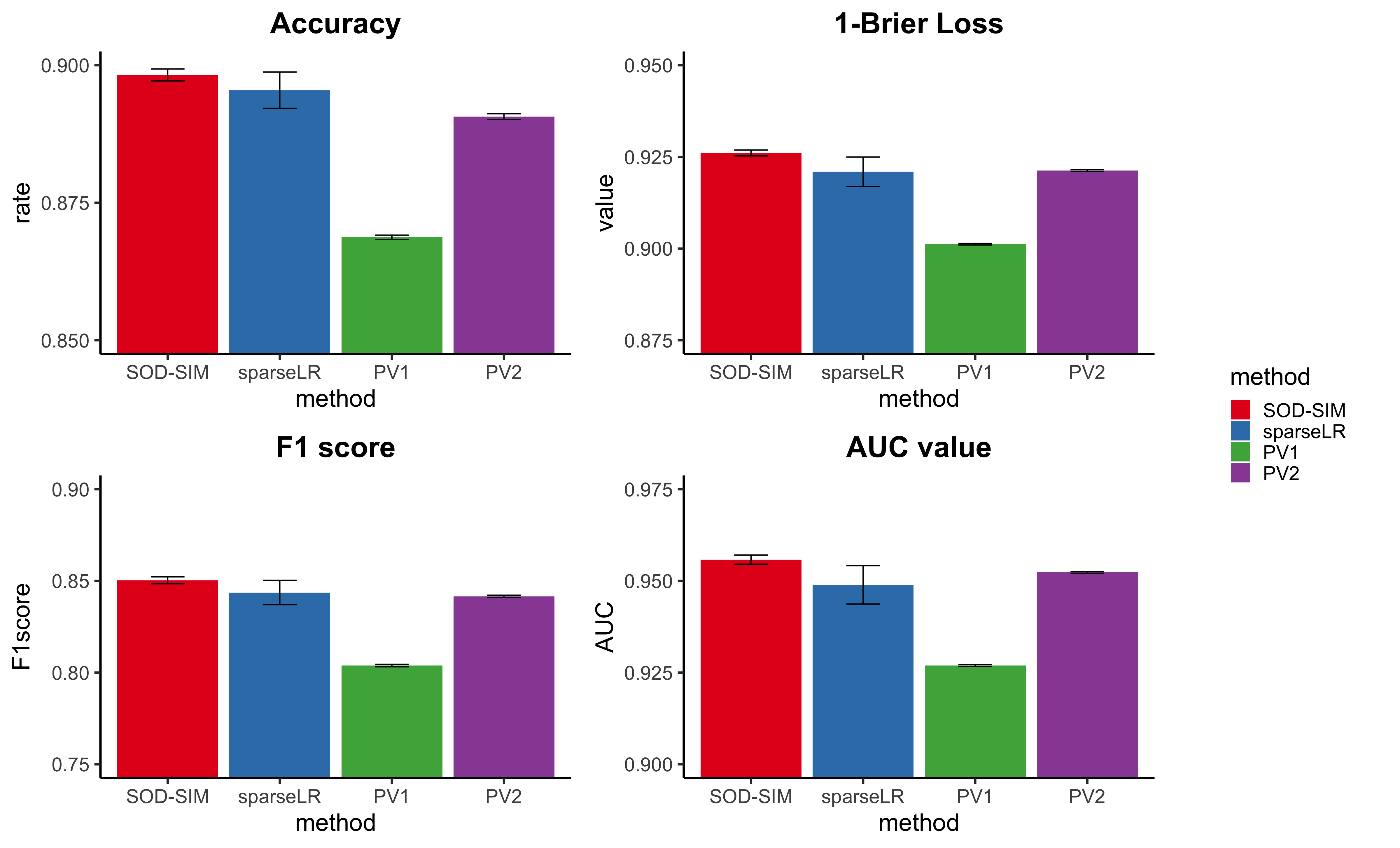}	
\caption{Average accuracy rates, F1 scores, 1-Brier Score, and AUC values of the four methods on 10 test datasets. Higher values correspond to better performance in all four plots. The error bars represent one standard error.}
\end{figure}\label{fig:fig1}

\section{Discussion}

In this paper, we propose a scalable iterative projection-based algorithm (SOD-SIM) for the estimation of hdSIMs, which we show the statistical convergence guarantees for the estimation of both the coefficient $u_\star$ and the unknown function $g_\star$. From the minimax results of isotonic regression perspective, our convergence rate for $g_\star$ is tight with respect to $n$ up to some poly log terms. Our simulation results suggest that under the mild model assumptions, the convergence rate for the estimation of $u_\star$ is also tight. However, our simulation results for the smooth $g_\star$ case suggests that the convergence rate might potentially decay faster with sample size $n$, under stronger model assumptions---while the theoretical upper bounds proved here scale as $n^{-1/3}$ (ignoring log terms), the simulations suggest that the lower bound for the settings with additional smoothness conditions might potentially be improved. The gap between these two rates in the smooth case remains an open question. 

Our statistical guarantee requires very mild model assumptions, especially for the covariates $\X$. This allows our theoretical guarantee to cover many useful examples such as the PU data. From our simulation studies, we have shown that even if at population level the covariates $\X$ is normally distributed, the positive-only data has asymmetric distribution and parametric models such as sparseLR suffer from estimation bias caused by both misspecification of the link function and the asymmetric distribution of the covariates; whereas our SOD-SIM algorithm has good performance in estimating $u_\star$ in high-dimensional settings. In our real data analysis, we have shown that our method (SOD-SIM) also outperforms methods based on the slicing regression idea (PV1, PV2).

\subsection*{Acknowledgements}
R.F.B. was partially supported by the National Science Foundation via grants DMS-1654076 and DMS-2023109, and by the Office of Naval Research via grant  N00014-20-1-2337. G.R.~was partially supported by the National Science Foundation via grant DMS-1811767 and by the National Institute of Health via grant R01 GM131381-01. H. S.~was partially supported by the National Institute of Health via grant R01 GM131381-01. The authors thank Sabyasachi Chatterjee for helpful discussions.

\bibliographystyle{abbrvnat}
\bibliography{thesis}
\appendix
\section{Additional proofs}\label{add_pf}
\subsection{Proof of Lemma~\ref{lem:iter_update_deterministic}}
First, writing $\bar{u} = \HT(\tilde{u})$,
since $u$ is a $s$-sparse unit vector we can calculate
\[1 = \norm{u}^2_2 = \inner{\tilde{u}}{u} \leq  \norm{\tilde{u}_{\textnormal{support}(u)}}_2 \leq \max_{S\subset[d],|S|=s} \norm{\tilde{u}_S}_2 = \norm{\bar{u}}_2 ,\]
where the second equality holds
since $\tilde{u}$ is equal to $u$ plus an orthogonal vector. Since $u_\star$ is a unit vector, we have
\[\norm{\check{u} - u_\star}_2 \leq \norm{\bar{u} - u_\star}_2,\]
since $\check{u} = \bar{u}/\norm{\bar{u}}_2$ is the projection of $\bar{u}$ onto the unit ball, which is a convex set containing $u_\star$. 

A trivial calculation shows that
\begin{equation}\label{eqn:step1_deterministic}
\norm{\bar{u} - u_\star}^2_2=\norm{u - u_\star}^2_2 - \norm{u - \bar{u}}^2_2 + 2\inner{u - \bar{u}}{u_\star - \bar{u}}.\end{equation}
Now we work with this inner product. By definition of $\tilde{u}$, we can write
\begin{multline*}
\inner{u -\bar{u}}{u_\star - \bar{u}} = \inner{\tilde{u} - \bar{u}}{u_\star - \bar{u}} + \frac{\eta}{n} \inner{\prp{u}\X^\top \big(\Y - \iso_{\X u}(\Y)\big)}{\bar{u} - u_\star}\\
\leq \frac{\sqrt{s_\star}}{2\sqrt{s}}\norm{u_\star - \bar{u}}^2_2+ \frac{\eta}{n} \inner{\prp{u}\X^\top \big(\Y - \iso_{\X u}(\Y)\big)}{\bar{u} - u_\star},
\end{multline*}
where the last step applies Lemma~\ref{lem:HT} (presented later on). Plugging this back into~\eqref{eqn:step1_deterministic} and rearranging terms, we obtain
\[
\left(1 - \sqrt{\frac{s_\star}{s}}\right) \norm{\bar{u} - u_\star}^2_2 \leq \norm{u - u_\star}^2_2 - \norm{u - \bar{u}}^2_2 +  \frac{2\eta}{ n}\inner{\prp{u}\X^\top \big(\Y - \iso_{\X u}(\Y)\big)}{\bar{u} - u_\star}.\]
Moreover, noting that $\ones_n^\top(y-\iso_v(y))=0$ for any $y,v\in\R^n$ by properties of isotonic regression, we can equivalently write this as
\begin{multline}\label{eqn:step2_deterministic}
\left(1 - \sqrt{\frac{s_\star}{s}}\right) \norm{\bar{u} - u_\star}^2_2 \\\leq \norm{u - u_\star}^2_2 - \norm{u - \bar{u}}^2_2 +  \frac{2\eta}{ n}\inner{\prp{u}\X^\top\prp{\ones_n} \big(\Y - \iso_{\X u}(\Y)\big)}{\bar{u} - u_\star}.\end{multline}
Next, working with this last term, denote $\mu_\star = g_\star(\X u_\star)$, we can write
\begin{multline*}\prp{\ones_n} \big(\Y - \iso_{\X u}(\Y)\big)\\
=  \prp{\ones_n}\left(\mu_\star - \iso_{\X u}(\mu_\star)\right) +\prp{\ones_n}\left(\Y - \mu_\star\right) + \prp{\ones_n}\left(\iso_{\X u}(\mu_\star) - \iso_{\X u}(\Y) \right)\\
=  \left(\mu_\star - \iso_{\X u}(\mu_\star)\right) +\left(\Z - \bar{Z}\ones_n\right) + \prp{\ones_n}\left(\iso_{\X u}(\mu_\star) - \iso_{\X u}(\Y) \right),\end{multline*}
and since
\begin{multline*}\inner{\prp{u}\X^\top \prp{\ones_n} \big(\Y - \iso_{\X u}(\Y)\big)}{\bar{u} - u_\star}
\leq \inner{\prp{u}\X^\top \big(\mu_\star - \iso_{\X u}(\mu_\star)\big)}{\bar{u} - u_\star} \\{}+ \Norm{\X^\top \left(\Z - \bar{Z}\ones_n\right)}_{\infty} \norm{\prp{u}\big(\bar{u}-u_\star\big)}_1 + 
 \norm{ \iso_{\X u}(\Y) - \iso_{\X u}(\mu_\star)}_2\norm{\prp{\ones_n}\X\prp{u}\big(\bar{u}-u_\star\big)}_2,\end{multline*}
 and therefore plugging in our definitions of $\textnormal{Err}_\infty(\Z)$ and  $\textnormal{Err}_\iso(\Z)$ from above,
\begin{multline*}\inner{\prp{u}\X^\top \big(\Y - \iso_{\X u}(\Y)\big)}{\bar{u} - u_\star}
\leq \inner{\prp{u}\X^\top \big(\mu_\star -\iso_{\X u}(\mu_\star)\big)}{\bar{u} - u_\star} \\
+n\textnormal{Err}_\infty(\Z)\cdot \norm{\prp{u}\big(\bar{u}-u_\star\big)}_1 + 
 n^{1/2}\textnormal{Err}_\iso(\Z)\cdot \norm{\prp{\ones_n}\X\prp{u}\big(\bar{u}-u_\star\big)}_2.\end{multline*}

Since $u,\bar{u}$ are $s$-sparse and $u_\star$ is $s_\star$-sparse, we can calculate that $\prp{u}\big(\bar{u}-u_\star\big)$ is $(2s+s_\star)$-sparse and therefore
\[\norm{\prp{u}\big(\bar{u}-u_\star\big)}_1 \leq \sqrt{2s+s_\star}\norm{\prp{u}\big(\bar{u}-u_\star\big)}_2\leq \sqrt{2s+s_\star}\norm{\bar{u}-u_\star}_2.\]
By our condition~\eqref{eqn:sparse_eig} bounding the sparse eigenvalues of $\X$, we also have 
\[\norm{\prp{\ones_n}\X\prp{u}\big(\bar{u}-u_\star\big)}_2\leq\norm{\X\prp{u}\big(\bar{u}-u_\star\big)}_2 \leq \sqrt{\beta n}\norm{\prp{u}\big(\bar{u}-u_\star\big)}_2 \leq \sqrt{\beta n}\norm{\bar{u}-u_\star}_2.\]
Combining everything and returning to~\eqref{eqn:step2_deterministic}, then,
\begin{multline}\label{eqn:step3_deterministic}
\left(1 - \sqrt{\frac{s_\star}{s}}\right) \norm{\bar{u} - u_\star}^2_2 \leq \norm{u - u_\star}^2_2 - \norm{u - \bar{u}}^2_2\\
+   2\eta \norm{\bar{u}-u_\star}_2 \left(  \sqrt{2s+s_\star} \cdot \textnormal{Err}_\infty(\Z) + \sqrt{\beta} \cdot  \textnormal{Err}_\iso(\Z)\right)\\
{} +  \frac{2\eta}{ n} \inner{\prp{u}\X^\top \big(\mu_\star - \iso_{\X u}(\mu_\star)\big)}{\bar{u} - u_\star}.
\end{multline}
Next we work with the remaining inner product. We have
\begin{align*}
& \inner{\prp{u}\X^\top \big(\mu_\star - \iso_{\X u}(\mu_\star)\big)}{\bar{u} - u_\star} \\
&=  \inner{\mu_\star - \iso_{\X u}(\mu_\star)}{\X\prp{u} (\bar{u}-u_\star))} \\
&=  \inner{\mu_\star - \iso_{\X u}(\mu_\star)}{\X\prp{u} \bar{u}} -  \inner{\mu_\star - \iso_{\X u}(\mu_\star)}{\X\prp{u} u_\star} \\
&=  \inner{\mu_\star - \iso_{\X u}(\mu_\star)}{\X\prp{u} (\bar{u}-u)} -  \inner{\mu_\star - \iso_{\X u}(\mu_\star)}{\X(u_\star-u \cdot u^\top u_\star)} \\
& \leq \norm{\mu_\star - \iso_{\X u}(\mu_\star)}_2\cdot\norm{\X\prp{u}(\bar{u} - u)}_2-  \inner{\mu_\star - \iso_{\X u}(\mu_\star)}{\X u_\star} \\
&\hspace{1in}+ \inner{u}{u_\star}\cdot \inner{\mu_\star - \iso_{\X u}(\mu_\star)}{\X u}\\
& \leq \norm{\mu_\star - \iso_{\X u}(\mu_\star)}_2\cdot \sqrt{\beta n}\norm{\bar{u} - u}_2-\frac{1}{L}\norm{\mu_\star - \iso_{\X u}(\mu_\star)}^2_2 ,\end{align*}
where to prove the last step, we apply~\eqref{eqn:sparse_eig} to the first term, apply Lemma~\ref{lem:ID} (presented later) to the second term, and for the third term we observe that it is
$\leq 0$, because $\inner{u}{u_\star}\geq 0$
by assumption while $\inner{\mu_\star - \iso_{\X u}(\mu_\star)}{\X u}\leq 0$ by definition of isotonic regression (i.e., since isotonic regression is projection onto a convex cone).
Finally, by Cauchy--Schwarz we have
\[ \norm{\mu_\star - \iso_{\X u}(\mu_\star)}_2\cdot \sqrt{\beta n}\norm{\bar{u} - u}_2 \leq  \frac{L\beta n}{2}\norm{\bar{u} - u}^2_2+ \frac{1}{2L}\norm{\mu_\star - \iso_{\X u}(\mu_\star)}^2_2.\]
Therefore, we have calculated 
\[\inner{\prp{u}\X^\top \big(\mu_\star - \iso_{\X u}(\mu_\star)\big)}{\bar{u} - u_\star} \leq \frac{L\beta n}{2}\norm{\bar{u} - u}^2_2-\frac{1}{2L}\norm{\mu_\star - \iso_{\X u}(\mu_\star)}^2_2.\]
Plugging these calculations back into~\eqref{eqn:step3_deterministic}, 
and applying the assumption that  $\eta\leq \frac{1}{L\beta}$, we obtain
\begin{multline}\label{eqn:step4_deterministic}
\left(1 - \sqrt{\frac{s_\star}{s}}\right) \norm{\bar{u} - u_\star}^2_2 \leq \norm{u - u_\star}^2_2-  \frac{\eta}{Ln} \norm{\mu_\star - \iso_{\X u}(\mu_\star)}^2_2 \\
+   2\eta \norm{\bar{u}-u_\star}_2 \left(  \sqrt{2s+s_\star} \cdot \textnormal{Err}_\infty(\Z) + \sqrt{\beta} \cdot  \textnormal{Err}_\iso(\Z)\right).
\end{multline}

Next, we split into cases. If $\norm{u-u_\star}\leq\eps_n$, then~\eqref{eqn:step4_deterministic} implies that
\[\left(1 - \sqrt{\frac{s_\star}{s}}\right) \norm{\bar{u} - u_\star}^2_2
\leq \eps_n^2 +  2\eta \norm{\bar{u}-u_\star}_2 \left(  \sqrt{2s+s_\star} \cdot \textnormal{Err}_\infty(\Z) + \sqrt{\beta} \cdot  \textnormal{Err}_\iso(\Z)\right)\]
which can be relaxed to
\[\norm{\bar{u} - u_\star}_2
\leq \frac{\eps_n}{\left(1-\sqrt{\frac{s_\star}{s}}\right)^{1/2}} + \frac{2\eta\left(\sqrt{2s+s_\star} \cdot \textnormal{Err}_\infty(\Z) + \sqrt{\beta} \cdot  \textnormal{Err}_\iso(\Z)\right)}{1-\sqrt{\frac{s_\star}{s}}}. \]
If instead $\norm{u-u_\star}_2\geq \eps_n$, then
by the identifiability assumption~\eqref{eqn:ID}, 
noting  that by the definition of isotonic regression, we can write $ \iso_{\X u}(\mu_\star) = g(\X u)$ for some monotone non-decreasing function $g$, we have
\[\frac{1}{L^2n}\norm{\mu_\star - \iso_{\X u}(\mu_\star)}^2_2 = \frac{1}{L^2n}\norm{g_\star(\X u_\star) - g(\X u)}^2_2 \geq \alpha \norm{u-u_\star}^2_2.\]
In this case,~\eqref{eqn:step4_deterministic} implies that
\begin{multline*}\left(1 - \sqrt{\frac{s_\star}{s}}\right) \norm{\bar{u} - u_\star}^2_2
\\\leq \left(1 -\alpha\eta L\right)\norm{u-u_\star}^2_2 +  2\eta \norm{\bar{u}-u_\star}_2 \left(  \sqrt{2s+s_\star} \cdot \textnormal{Err}_\infty(\Z) + \sqrt{\beta} \cdot  \textnormal{Err}_\iso(\Z)\right)\end{multline*}
which can be relaxed to
\[\norm{\bar{u} - u_\star}_2
\leq \left(\frac{1- \alpha\eta L}{1-\sqrt{\frac{s_\star}{s}}}\right)^{1/2}\norm{u-u_\star}_2 + \frac{2\eta\left(\sqrt{2s+s_\star} \cdot \textnormal{Err}_\infty(\Z) + \sqrt{\beta} \cdot  \textnormal{Err}_\iso(\Z)\right)}{1-\sqrt{\frac{s_\star}{s}}}. \]
Finally, since we proved above that $ \norm{\check{u} - u_\star}_2\leq \norm{\bar{u} - u_\star}_2$,
this completes the proof.

\subsubsection{Supporting lemmas}
\begin{lemma}[Liu \& Barber 2018, Lemma 1]\label{lem:HT}
For any $v\in\R^p$ and $s_\star$-sparse $w\in\R^p$, if $s\geq s_\star$, then
\[\inner{v - \HT(v)}{w - \HT(v)} \leq \frac{\sqrt{s_\star}}{2\sqrt{s}}\norm{w - \HT(v)}^2_2.\]
\end{lemma}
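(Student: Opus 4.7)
The plan is to reduce the inner product to a sum over a single small index set, then combine the defining property of hard thresholding with a weighted AM--GM inequality to match the constant $\sqrt{s_\star}/(2\sqrt{s})$.

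First I would set up notation. Let $S = \textnormal{support}(\HT(v))$ (the top-$s$ entries of $v$ in magnitude, so $|S|=s$), let $S_\star = \textnormal{support}(w)$ (so $|S_\star|\leq s_\star$), and partition into $A = S\cap S_\star$, $B = S\setminus S_\star$, and $T = S_\star\setminus S$. The vector $v - \HT(v)$ vanishes on $S$ and equals $v$ on $S^c$, while $\HT(v)$ vanishes on $S^c$, so the inner product collapses to
\[\inner{v - \HT(v)}{w - \HT(v)} = \sum_{i\in S^c} v_i w_i = \sum_{i\in T} v_i w_i \leq \norm{v_T}_2\norm{w_T}_2,\]
by Cauchy--Schwarz.

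Next I would exploit the thresholding property: every entry of $|v_T|$ is dominated by every entry of $|v_B|$, because $B$ was kept in the top-$s$ while $T$ was thresholded out. This gives the pigeonhole bound $\norm{v_T}_2^2 \leq \frac{|T|}{|B|}\norm{v_B}_2^2$. The combinatorial fact $|T|\leq s_\star - |A|$ and $|B| = s - |A|$, combined with the hypothesis $s_\star\leq s$, reduces this ratio to $|T|/|B|\leq s_\star/s$, giving $\norm{v_T}_2\leq \sqrt{s_\star/s}\,\norm{v_B}_2$. The degenerate case $|B|=0$ forces $|A|=s\geq s_\star$ and hence $|T|=0$, so the original sum vanishes and the inequality is trivial.

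Finally, I would decompose the right-hand side to match the left. Since $w$ vanishes on $B$ and $\HT(v)$ vanishes on $T$,
\[\norm{w - \HT(v)}_2^2 = \norm{(w-v)_A}_2^2 + \norm{v_B}_2^2 + \norm{w_T}_2^2 \geq \norm{v_B}_2^2 + \norm{w_T}_2^2.\]
Chaining with weighted AM--GM,
\[\norm{v_T}_2\norm{w_T}_2\leq \sqrt{s_\star/s}\cdot \norm{v_B}_2\norm{w_T}_2\leq \frac{\sqrt{s_\star}}{2\sqrt{s}}\big(\norm{v_B}_2^2 + \norm{w_T}_2^2\big)\leq \frac{\sqrt{s_\star}}{2\sqrt{s}}\norm{w-\HT(v)}_2^2,\]
closing the argument. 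The main obstacle, I expect, is locating the right pairing: the naive estimate $\norm{v_T}_2\leq \sqrt{s_\star/s}\,\norm{\HT(v)}_2$ is too weak to cancel against the right-hand side. The key insight is that the comparison must be $\norm{v_T}_2$ against $\norm{v_B}_2$ only (the indices of $S$ where $w$ itself vanishes), so that $\norm{v_B}_2^2$ and $\norm{w_T}_2^2$ appear symmetrically inside $\norm{w-\HT(v)}_2^2$ and AM--GM lands exactly on the stated constant.
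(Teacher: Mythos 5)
Your proof is correct. Since the paper simply cites this as Lemma~1 of Liu \& Barber (2018) and does not reproduce an argument, there is no in-text proof to compare against; your argument supplies the omitted proof and is, as far as I can tell, essentially the standard one for this kind of thresholding comparison. A few small checks that confirm the steps: the inner product $\inner{v - \HT(v)}{w - \HT(v)}$ does indeed collapse to $\sum_{i\in T} v_i w_i$ because $v-\HT(v)$ is supported on $S^c$, $\HT(v)$ vanishes there, and $w$ vanishes off $S_\star$; the ratio $|T|/|B|\leq (s_\star - |A|)/(s-|A|)$ is nonincreasing in $|A|$ precisely because $s\geq s_\star$, so it is bounded by $s_\star/s$ (this is exactly where the red hypothesis $s\geq s_\star$ enters, and the argument would fail without it); and the $|B|=0$ degenerate case is handled correctly. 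One cosmetic note: the last chaining step is just ordinary AM--GM, $ab\leq\tfrac12(a^2+b^2)$, multiplied through by the scalar $\sqrt{s_\star/s}$; calling it ``weighted AM--GM'' is harmless but slightly misleading. Your closing remark correctly identifies why the naive bound $\norm{v_T}_2\leq\sqrt{s_\star/s}\,\norm{\HT(v)}_2$ is insufficient --- it would introduce $\norm{v_A}_2^2$, which need not be absorbed by $\norm{w-\HT(v)}_2^2$ when $w_A\approx v_A$ --- so pairing $v_T$ against $v_B$ alone is the essential idea, and it also makes the constant tight.
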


\begin{lemma}\label{lem:ID}
For any vectors $v,w\in\R^n$ and any $L$-Lipschitz monotone nondecreasing function $g$,
\[\inner{v}{ g(v) - \iso_{w}(g(v))} \geq L^{-1}\norm{g(v) - \iso_{w}(g(v)) }^2_2.\]
\end{lemma}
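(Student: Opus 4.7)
The plan is to decompose $v$ as $L^{-1}\bigl(g(v) + (Lv - g(v))\bigr)$, which splits the inner product into a piece handled by the cone-projection KKT condition and a piece handled by a comonotonicity argument.

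Concretely, set $y = g(v)$ and $\hat y = \iso_{w}(y)$, and write
\[
\inner{v}{y - \hat y} \;=\; L^{-1}\inner{y}{y - \hat y} \;+\; L^{-1}\inner{Lv - y}{y - \hat y}.
\]
Since $\iso_w$ is projection onto the closed convex cone of $w$-monotone vectors, and that cone contains $0$, the KKT characterization gives $\inner{\hat y}{y - \hat y} = 0$, so the first term equals $L^{-1}\norm{y - \hat y}_2^2$. That is exactly the right-hand side of the lemma, so it remains to show $\inner{Lv - y}{y - \hat y} \geq 0$.

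For this I would combine two structural facts. First, because $g$ is $L$-Lipschitz and nondecreasing, the scalar map $\tilde h(t) := Lt - g(t)$ is nondecreasing, so both $y = g(v)$ and $Lv - y = \tilde h(v)$ are nondecreasing functions applied coordinate-wise to $v$; in particular they are comonotone with one another. Second, after permuting indices into $w$-order, $\hat y$ is piecewise-constant on the PAV blocks $B_1,\ldots,B_k$ and on each block equals the block-average of $y$, so $\sum_{i \in B}(y_i - \hat y_i) = 0$ for every block $B$. That zero-sum property lets me center the first factor within each block and rewrite
\[
\sum_{i \in B}(Lv_i - y_i)(y_i - \hat y_i) \;=\; \sum_{i \in B}\bigl(\tilde h(v_i) - \overline{\tilde h(v)}_B\bigr)\bigl(g(v_i) - \overline{g(v)}_B\bigr).
\]
Applying the standard identity $\sum_{i \in B}(a_i - \bar a)(b_i - \bar b) = \tfrac{1}{2|B|}\sum_{i,i' \in B}(a_i - a_{i'})(b_i - b_{i'})$ to $a = \tilde h(v)$ and $b = g(v)$, each pairwise term $(\tilde h(v_i) - \tilde h(v_{i'}))(g(v_i) - g(v_{i'}))$ is nonnegative because both $\tilde h$ and $g$ are nondecreasing, so $(v_i - v_{i'})$ controls the sign of both differences. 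Summing over pairs and then over blocks gives $\inner{Lv - y}{y - \hat y} \geq 0$, completing the proof.

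The main obstacle is just noticing the correct split $Lv = g(v) + \tilde h(v)$: once it is in hand, the cone-projection identity disposes of one piece and a block-wise Chebyshev/rearrangement argument disposes of the other, and no step is technically delicate. Without that split, however, it is not obvious how to introduce the Lipschitz constant $L$ into an inner product that does not contain it syntactically.
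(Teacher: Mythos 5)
Your proof is correct, but it takes a genuinely different route from the paper's. The paper works directly on the level sets $I_k$ of $\iso_w(g(v))$: using the block-average property $\frac{1}{|I_k|}\sum_{i\in I_k}g(v_i)=a_k$, it subtracts $b_k\sum_{i\in I_k}(g(v_i)-a_k)=0$ where $b_k$ is a preimage with $g(b_k)=a_k$ (whose existence needs continuity of $g$ and the intermediate value theorem), and then applies the pointwise bound $(v_i-b_k)\big(g(v_i)-g(b_k)\big)\geq L^{-1}\big(g(v_i)-g(b_k)\big)^2$, which packages monotonicity and Lipschitzness together, before summing over blocks. You instead split $Lv=g(v)+\tilde h(v)$ with $\tilde h(t)=Lt-g(t)$: the $g(v)$ piece produces the entire right-hand side at once via the cone-projection orthogonality $\inner{\iso_w(y)}{y-\iso_w(y)}=0$, and the $\tilde h(v)$ piece is shown nonnegative by centering within each constant block (using the same zero-sum/block-average fact the paper uses) and a Chebyshev-type comonotonicity inequality, where Lipschitzness enters only through the monotonicity of $\tilde h$. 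Your version avoids introducing the preimages $b_k$ altogether and cleanly separates the roles of the two hypotheses on $g$, at the cost of invoking the Moreau/KKT identity for projection onto the monotone cone (which, as a side remark, also follows from the block zero-sum property, since $\iso_w(y)$ is constant on each block); the paper's version is more self-contained and elementary, needing nothing beyond the block-average characterization. Both arguments are of comparable length and both are sound.
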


\begin{proof}[Proof of Lemma~\ref{lem:ID}]
Let $a_1<\dots<a_K$ be the unique values of the vector $\iso_{w}(g(v))$, and for each $k=1,\dots,K$ let 
\[I_k = \Big\{i: \big( \iso_{w}(g(v))  \big)_i = a_k\Big\}.\] 
By definition of isotonic regression, we have
\begin{equation}\label{eqn:iso_mean}\frac{1}{|I_k|}\sum_{i\in I_k} g(v)  = a_k.\end{equation}
For each $k$, let $b_k\in\R$ satisfy
\[g(b_k) = a_k,\]
which must exist since $g:\R\rightarrow\R$ is continuous (due to being Lipschitz) and $a_k$ is a convex combination of values taken by $g$.
We then have
\begin{align*}
 \inner{v}{ g(v) - \iso_{w}(g(v)) } 
&=\sum_{i=1}^n v_i \cdot \big(g(v_i) -  \iso_{w}(g(v))_i\big)\\
&= \sum_{k=1}^K \sum_{i\in I_k} v_i \cdot \big(g(v_i) - a_k\big)\\
&= \sum_{k=1}^K\left( \sum_{i\in I_k} v_i \cdot \big(g(v_i) - a_k\big) - b_k \cdot \sum_{i\in I_k}  \big(g(v_i) - a_k\big) \right)\textnormal{ by~\eqref{eqn:iso_mean}}\\ 
&= \sum_{k=1}^K \sum_{i\in I_k} (v_i  - b_k)\cdot \big(g(v_i) - g(b_k)\big)\textnormal{ by definition of $b_k$} \\
&\geq \sum_{k=1}^K \sum_{i\in I_k} L^{-1} \big(g(v_i) - a_k\big)^2\textnormal{ since $g$ is $L$-Lipschitz and monotone}\\
&= \sum_{k=1}^K \sum_{i\in I_k} L^{-1} \Big(g(v_i) -\big( \iso_{w}(g(v)) \big)_i \Big)^2\textnormal{ by definition of $a_k$}\\
&=L^{-1} \norm{g(v) - \iso_{w}(g(v))}^2_2.
\end{align*}
\end{proof}

\subsection{Proof of Lemma~\ref{lem:bound_err_terms}}
First we bound $\textnormal{Err}_\infty(\Z)$. 
By the sparse eigenvalue bound~\eqref{eqn:sparse_eig} on $\X$, we have $\norm{\X_j}_2 = \norm{\X\mathbf{e}_j}_2 \leq \sqrt{\beta n}$
for all $j=1,\dots,p$.
Since $\Z=\Y - \mu_\star$, $\Z - \bar{Z}\ones_n$ is zero-mean and $\sigma$-subgaussian, we therefore
see that $\X_j^\top (\Z - \bar{Z}\ones_n)$ is also zero-mean and is $\sigma\sqrt{\beta n}$-subgaussian.
Therefore, for each $j$,
\[\PP{n^{-1}\big|\X_j^\top (\Z - \bar{Z}\ones_n)\big|> \sigma\sqrt{ \frac{2\beta \log(4p/\delta)}{n}}} \leq \frac{\delta}{2p},\]
and so taking a union bound, we have shown that
\[\PP{\textnormal{Err}_\infty(\Z) \leq \sigma\sqrt{ \frac{2\beta \log(4p/\delta)}{n}}}\geq 1- \frac{\delta}{2}.\]

Next we turn to $\textnormal{Err}_\iso(\Z)$. First,
for any vector $v\in\R^n$, define
 $\pi_v$ to be a permutation of $\{1,\dots,n\}$ such that $v_{\pi_v(1)}\leq \dots \leq v_{\pi_v(n)}$, i.e., we
rearrange the indices into the ordering of $v$. (If $v$ has some repeated entries then $\pi_v$ will not be unique but
we can arbitrarily choose one such permutation for each $v$.)

First, for any fixed $v$,
we will compute a deterministic bound on
$\norm{\iso_v(\Y) - \iso_v(\mu_\star)}_2$. 
We know that $\mu_\star\in[-B,B]^n$ and therefore $\iso_v(\mu_\star)\in[-B,B]^n$,
and is monotone nondecreasing by definition. 
By Lemma 11.1 in \cite{chatterjee2015}, we can find some partition 
\[\{1,\dots,n\} = \underbrace{\{\pi_v(1),\dots,\pi_v(k_1)\}}_{=:S_1} \cup \underbrace{\{\pi_v(k_1+1),\dots,\pi_v(k_2)\}}_{=:S_2}\cup\dots\cup
 \underbrace{\{\pi_v(k_{M-1}+1),\dots,\pi_v(n)\}}_{=:S_M},\]
with $M\leq \lceil n^{1/3}\rceil$, such that 
\[\max_{i\in S_m} \iso_v(\mu_\star) - \min_{i\in S_m}\iso_v(\mu_\star)\leq 2Bn^{-1/3}\textnormal{ for all $m=1,\dots,M$}.\]
In other words, this condition ensures $\iso_v(\mu_\star)$ has little variation within each index subset $S_m$.
We define $k_0 = 0$ and $k_M=n$ below, so that we can write $S_m = \{\pi_v(k_{m-1}+1),\dots,\pi_v(k_m)\}$ for each $m=1,\dots,M$.

Next for any permutation $\pi$ of $\{1,\dots,n\}$, define a seminorm 
\[\norm{w}_{\textnormal{SW},\pi} = \max_{1\leq j \leq k \leq n} \frac{\left|\sum_{\ell=j}^k w_{\pi(\ell)}\right|}{\sqrt{k-j+1}}.\]
This is the ``sliding window'' norm of \cite{yang2018}, defined according to the ordering of the vector $v$.
By Theorem 1 and Lemma 2 of \cite{yang2018}, it holds that
\[\norm{\iso_v(x) - \iso_v(x')}_{\textnormal{SW},\pi_v} \leq \norm{x-x'}_{\textnormal{SW},\pi_v}\]
for all $x,x'\in\R^n$. Therefore, for all $1\leq j\leq k\leq n$,
\[\frac{\left|\sum_{\ell = j}^k \big(\iso_v(\Y)_{\pi_v(\ell)} - \iso_v(\mu_\star)_{\pi_v(\ell)}\big)\right|}{ \sqrt{k-j+1}}
\leq \norm{\iso_v(\Y)-\iso_v(\mu_\star)}_{\textnormal{SW},\pi_v}
\leq \norm{\Y - \mu_\star}_{\textnormal{SW},\pi_v} =  \norm{\Z}_{\textnormal{SW},\pi_v}.\]
Now fix any $\ell\in\{1,\dots,n\}$ and let $m_{\ell}$ be the index such that $\ell\in S_{m_\ell}$.
We have
\begin{align*}\iso_v(\Y)_{\pi_v(\ell)}
& \leq \frac{\sum_{j=\ell}^{k_{m_\ell}} \iso_v(\Y)_{\pi_v(j)}}{k_{m_\ell}-\ell+1}\textnormal{\quad since $\iso_v(\Y)$ is monotone nondecreasing}\\
& \leq \frac{\sum_{j=\ell}^{k_{m_\ell}} \iso_v(\mu_\star)_{\pi_v(j)}}{k_{m_\ell}-\ell+1} + \frac{\norm{\Z}_{\textnormal{SW},\pi_v}}{\sqrt{k_m-\ell+1}}\textnormal{\quad by the bound above}\\
& \leq \iso_v(\mu_\star)_{\pi_v(\ell)} + 2Bn^{-1/3} + \frac{\norm{\Z}_{\textnormal{SW},\pi_v}}{\sqrt{k_{m_\ell}-\ell+1}}\textnormal{\quad by construction of the partition}.
\end{align*}
Similarly,
\[\iso_v(\Y)_{\pi_v(\ell)} \geq \iso_v(\mu_\star)_{\pi_v(\ell)}-  2Bn^{-1/3} - \frac{\norm{\Z}_{\textnormal{SW},\pi_v}}{\sqrt{\ell - k_{{m_\ell}-1}}}.\]
Therefore,
\[\left|\iso_v(\Y)_{\pi_v(\ell)} - \iso_v(\mu_\star)_{\pi_v(\ell)}\right| \leq   2Bn^{-1/3} +  \frac{\norm{\Z}_{\textnormal{SW},\pi_v}}{\sqrt{\min\{ \ell - k_{m_\ell-1},k_{m_\ell} -\ell + 1\}}},\]
and so
\begin{align*}
\norm{\iso_v(\Y) - \iso_v(\mu_\star)}_2
& \leq \sqrt{n} \cdot 2Bn^{-1/3} + \norm{\Z}_{\textnormal{SW},\pi_v}\cdot  \sqrt{\sum_{\ell=1}^n \frac{1}{\min\{\ell - k_{m_\ell-1},k_{m_\ell} -\ell + 1\}}}\\
& =  2Bn^{1/6} + \norm{\Z}_{\textnormal{SW},\pi_v}\cdot  \sqrt{\sum_{m=1}^M \sum_{\ell=1}^{k_m - k_{m-1}} \frac{1}{\min\{\ell,k_m-k_{m-1}-\ell+1\}}}\\
& \leq 2Bn^{1/6} +  \norm{\Z}_{\textnormal{SW},\pi_v}\sqrt{\sum_{m=1}^M 2\log\big(2(k_m-k_{m-1})\big)}\\
&  = 2Bn^{1/6} +  \norm{\Z}_{\textnormal{SW},\pi_v}\sqrt{2\log\big(2^M(k_1-k_0)(k_2-k_1)\dots(k_M-k_{M-1})\big)}.
\end{align*}
Since $0=k_0 < k_1 < \dots < k_M=n$,  it holds that $(k_1-k_0)(k_2-k_1)\dots(k_M-k_{M-1}) \leq (n/M)^M$, and therefore,
\[\norm{\iso_v(\Y) - \iso_v(\mu_\star)}_2
 \leq  2Bn^{1/6} +  \norm{\Z}_{\textnormal{SW},\pi_v}\sqrt{2M\log(2n/M)}.\]
 Since $M=\lceil n^{1/3}\rceil$, as long as $n\geq 2$ we can relax this to
 \[\norm{\iso_v(\Y) - \iso_v(\mu_\star)}_2\leq 2Bn^{1/6} +  \norm{\Z}_{\textnormal{SW},\pi_v} 2n^{1/6}\sqrt{\log n}.\]
 
 Since this holds deterministically for any $v$, by considering $v=\X u$ we see that
\begin{multline*}\textnormal{Err}_\iso(\Z) = \sup_{u \in \mathbb{S}_s^{p-1}}\left\{n^{-1/2}\norm{\iso_{\X u}(\Y) -  \iso_{\X u}(\mu_\star)}_2 \right\}\\
\leq 2Bn^{-1/3} + 2n^{-1/3}\sqrt{\log n}\cdot \sup_{u \in \mathbb{S}_s^{p-1}}\left\{ \norm{\Z}_{\textnormal{SW},\pi_{\X u}} \right\}.\end{multline*}

Next
define
\[\mathcal{S}_{n,p}^{\textnormal{$s$-sparse}}(X_1,\dots,X_n) = \left\{\pi \in \mathcal{S}_n : \pi = \pi_{\X u}\textnormal{ for some $u \in \mathbb{S}_s^{p-1}$} \right\}.\] 
In other words, this is the set of possible orderings of the entries of $\X u$, under any $u \in \mathbb{S}_s^{p-1}$. 
Write $\mathcal{S}_{n,p}^{\textnormal{$s$-sparse}}(X_1,\dots,X_n) = \{\pi_1,\dots,\pi_K\}$ where $K=\left|\mathcal{S}_{n,p}^{\textnormal{$s$-sparse}}(X_1,\dots,X_n) \right|$. 
This means that
for every $s$-sparse $u$, 
\[\norm{\Z}_{\textnormal{SW},\pi_{\X u}}=\norm{\Z}_{\textnormal{SW},\pi_k}\textnormal{ for some $k\in\{1,\dots,K\}$},\]
and so we can write
\[\textnormal{Err}_\iso(\Z) 
\leq 2Bn^{-1/3} + 2n^{-1/3}\sqrt{\log n}\cdot \max_{k=1,\dots,K}\norm{\Z}_{\textnormal{SW},\pi_k}.\]
Finally, for $\sigma$-subgaussian zero-mean $\Z$ and any fixed permutation $\pi$, Lemma 3 of \cite{yang2018} proves that
 \[ \norm{\Z}_{\textnormal{SW},\pi}\leq \sqrt{2\sigma^2\log\left(\frac{n^2+n}{\delta'}\right)}\]
 with probability at least $1-\delta'$, for any $\delta'>0$. Setting $\delta' = \frac{\delta}{2K}$, then,
 \[\max_{k=1,\dots,K} \norm{\Z}_{\textnormal{SW},\pi_k}\leq \sqrt{2\sigma^2\log\left(\frac{(n^2+n)\cdot 2K}{\delta}\right)}\]
 with probability at least $1-\frac{\delta}{2}$.
Finally, Lemma~\ref{lem:perms} below proves that, deterministically,
\[K=\left|\mathcal{S}_{n,p}^{\textnormal{$s$-sparse}}(X_1,\dots,X_n) \right|\leq n^{2s-1}p^s.\]
Therefore, with probability at least $1-\frac{\delta}{2}$,
 \[\max_{k=1,\dots,K} \norm{\Z}_{\textnormal{SW},\pi_k}\leq \sqrt{2\sigma^2\log\left(\frac{(n^2+n)\cdot 2n^{2s-1}p^s}{\delta}\right)},\]
 which completes the proof.

\subsubsection{Supporting lemma}
\begin{lemma}\label{lem:perms}
For any sample size $n$, dimension $p$,  sparsity level $s$, and points $x_1,\dots,x_n\in\R^p$, define the set
\[\mathcal{S}_{n,p}^{\textnormal{$s$-sparse}}(x_1,\dots,x_n) = \left\{\pi \in \mathcal{S}_n : x_{\pi(1)}^\top u \leq x_{\pi(2)}^\top u\leq \dots \leq x_{\pi(n)}^\top u\textnormal{ for some $s$-sparse $u\in\R^p$}\right\}.\] 
Then we
have
\[\left|\mathcal{S}_{n,p}^{\textnormal{$x$-sparse}}(x_1,\dots,x_n) \right| \leq n^{2s-1}p^s.\]
\end{lemma}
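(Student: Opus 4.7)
The plan is to count orderings in two stages: first enumerate the support of $u$, then apply a hyperplane-arrangement bound for each fixed support.

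Any $s$-sparse $u\in\R^p$ is supported on some $S\subseteq\{1,\dots,p\}$ of size at most $s$; padding with zeros if necessary, we may assume $|S|=s$, giving at most $\binom{p}{s}\leq p^s$ supports to consider. Fix such an $S$ and write $y_i := (x_i)_S\in\R^s$ and $v := u_S\in\R^s$, so that $x_i^\top u = \inner{y_i}{v}$; hence the ordering depends on $u$ only through $v\in\R^s$.

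For each such $S$, the ordering of $(\inner{y_i}{v})_{i=1}^n$ is determined (up to tie-breaking) by the sign vector $\big(\sign\inner{y_i - y_j}{v}\big)_{i<j}\in\{-1,0,+1\}^{\binom{n}{2}}$, so the number of distinct orderings is at most the number of cells of the central hyperplane arrangement
\[\mathcal{A}_S \;=\; \Big\{\{v\in\R^s : \inner{y_i-y_j}{v}=0\} : 1\leq i<j\leq n\Big\}\;\subset\;\R^s,\]
which contains at most $m := \binom{n}{2}$ hyperplanes, all passing through the origin. By standard bounds on central arrangements (Zaslavsky's theorem, or equivalently passing to the quotient on $\R P^{s-1}$), the total face count is $\O{m^{s-1}}=\O{n^{2(s-1)}}$. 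Combining with the $\leq p^s$ supports yields $|\mathcal{S}_{n,p}^{s\textnormal{-sparse}}|\leq p^s\cdot\O{n^{2(s-1)}}\leq n^{2s-1}p^s$, absorbing all constants into the extra factor of $n$.

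The main technical point is bookkeeping the orderings arising from $v$'s on lower-dimensional faces of $\mathcal{A}_S$, where $\X u$ has ties and the tie-breaking convention in the definition of $\pi_v$ could in principle contribute more than one total ordering per face. This is handled by observing that each such attainable $\pi_v$ also arises as the ordering of some adjacent full-dimensional chamber (under a standard tie-breaking, or after a small generic perturbation of $\X$), so the chamber count already suffices, and any residual polynomial multiplicity is comfortably absorbed by the factor-of-$n$ slack between $\O{n^{2(s-1)}}$ and the claimed $n^{2s-1}$.
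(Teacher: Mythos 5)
Your two-stage plan coincides with the paper's: enumerate the at most $p^s$ supports, then bound the number of linearly inducible orderings of $n$ points in $\R^s$. The paper handles the second stage by citing Cover (1967) and verifying $Q(n,s)\le n^{2s-1}$ by induction on the recursion; you count chambers of the central arrangement of the $\binom{n}{2}$ difference hyperplanes, which is the idea behind Cover's theorem, so the routes are essentially the same with a different black box. Your route can be made to work, but since the claimed bound is non-asymptotic in both $n$ and $s$, you cannot stop at ``$\O{m^{s-1}}$, absorb the constants'': the implicit constant depends on $s$ and must be checked against the single spare factor of $n$. The check does close, and should be written out: a central arrangement of $m$ hyperplanes in $\R^s$ has at most $2\sum_{k=0}^{s-1}\binom{m-1}{k}\le 2s\,m^{s-1}$ chambers, and with $m\le n^2/2$ this is at most $2s\,2^{-(s-1)}n^{2s-2}\le 2n^{2s-2}\le n^{2s-1}$ for $n\ge 2$, the case $n=1$ being trivial. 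With that computation in place, this part of your argument is a fine, self-contained substitute for the citation to Cover.

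The genuine gap is the tie-handling paragraph. The claim that every attainable $\pi_v$ with $v$ on a lower-dimensional face ``arises as the ordering of some adjacent full-dimensional chamber'' is false for degenerate configurations: take $s=2$, $y_1=(0,0)$, $y_2=(1,0)$, $y_3=(2,0)$; at $v=(0,1)$ all three products tie, so the permutation $(2,1,3)$ satisfies the weak inequalities, but no chamber realizes $\inner{y_2}{v}<\inner{y_1}{v}<\inner{y_3}{v}$, since that would force $v_1<0<2v_1$. Nor is the ``residual multiplicity'' polynomially small: if all $x_i$ coincide, every one of the $n!$ permutations satisfies the weak inequalities, which no factor-of-$n$ slack absorbs, and which a generic perturbation of $\X$ does not dominate (perturbing changes which orderings are realizable; it does not produce a superset of the tied ones). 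So under the literal ``for some $u$, weak inequalities'' reading, neither a chamber count nor a perturbation argument can deliver $n^{2s-1}p^s$; what the downstream application (the union bound in the proof of Lemma~\ref{lem:Err1_Err2}) actually needs is one tie-broken ordering $\pi_{\X u}$ per vector $\X u$, i.e.\ one representative per realizable ordering pattern, and for that count degeneracies only help. The paper reaches the same place by invoking Cover's bound for non-generic position (its statement shares the same wrinkle about ties). A correct version of your proof should either adopt that per-$u$ representative interpretation explicitly and count realizable sign patterns, or cite Cover as the paper does; the adjacent-chamber/perturbation heuristic as written does not suffice.
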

\begin{proof}
Define 
\[\mathcal{S}_{n,s}(x_1,\dots,x_n) = \left\{\pi \in \mathcal{S}_n : x_{\pi(1)}^\top u \leq x_{\pi(2)}^\top u\leq \dots \leq x_{\pi(n)}^\top u\textnormal{ for some $u\in\R^s$}\right\}.\] 
First, for any $s$-sparse $u\in\R^p$, let $A\subseteq[p]$ be some subset of size $|A|=s$ containing the support of $u$. Denote the permutation $\pi$ such that $x_{\pi(1)}^\top u \leq x_{\pi(2)}^\top u\leq \dots \leq x_{\pi(n)}^\top u$ as $\pi_u(x_1,\dots,x_n)$. Then it's clear that
\[\pi_u(x_1,\dots,x_n) \in \mathcal{S}_{n,s}(x_{1A},\dots,x_{nA}).\]
where we write $x_{iA}\in\R^s$ as the subvector of $x_i$ with entries indexed by $A$. Therefore we have 
\[\left|\mathcal{S}_{n,p}^{\textnormal{$s$-sparse}}(x_1,\dots,x_n) \right| \leq \sum_{A\subset[p],|A|\leq s}\left|\mathcal{S}_{n,s}(x_{1A},\dots,x_{nA})\right| \leq p^s \cdot \max_{y_1,\dots,y_n\in\R^s}\left|\mathcal{S}_{n,s}(y_1,\dots,y_n)\right|,\]
since the number of subsets $A$ is bounded by ${p\choose s}\leq p^s$. Therefore, it suffices to compute a bound for the non-sparse case, i.e.~to bound $\left|\mathcal{S}_{n,s}(y_1,\dots,y_n)\right|$. Cover (1967) proves that for points $y_1,\dots,y_n$ in generic position, this is exactly equal to $Q(n,s)$ which is defined by the recursive relation
\[Q(n+1,s) = \begin{cases}1, & n=0,\\ Q(n,s) + n Q(n,s-1), & s\geq 2,~\text{and}~n\geq 1\\ 2,& s = 1~\text{and}~n\geq 1.\end{cases}\]
If $y_1,\dots,y_n$ are not in generic position, then the cardinality is instead bounded by $Q(n,s)$. Finally, using this recursive relation, we check that for the case $n=1$, trivially $Q(1,s) = 1 \leq 1^{2s-1}$ for any $s\geq 1$, and for the case $s=1$ and $n\geq 2$, $Q(n,1) = 2 \leq n^{2\cdot 1-1}$. Then for $n\geq 2$ and $s\geq 2$, we inductively have
\[Q(n+1,s) = Q(n,s) + nQ(n,s-1)\leq n^{2s-1} + n\cdot n^{2(s-1)-1} = (n+1)\cdot n^{2s-2} \leq (n+1)^{2s-1},\]
proving the desired bound.
\end{proof}

\subsection{Proof of Lemma~\ref{lem:init}}

Define $\tilde{u}_0 = \frac{1}{n}\X^\top (\Y - \bar{Y}\ones_{n})$. 
First, writing $\bar{g}_\star(\X u_\star) = \frac{1}{n}\sum_{i=1}^n g_\star(X_i^\top u_\star)$, we have 
\begin{align*}
\inner{\tilde{u}_0}{u_\star} 
&=  \frac{1}{n}\inner{u_\star}{\X^\top(\Y - \bar{Y}\ones_{n})}\\
&=  \frac{1}{n}\inner{\X u_\star}{g_\star(\X u_\star) - \bar{g}_\star(\X u_\star)\ones_{n}} + \frac{1}{n}\inner{u_\star}{\X^\top(\Z - \bar{Z} \ones_{n})}\\
&\geq  \frac{1}{n}\inner{\X u_\star}{g_\star(\X u_\star) - \bar{g}_\star(\X u_\star)\ones_{n}} - \sqrt{s_\star} \norm{n^{-1}\X^\top(\Z - \bar{Z}\ones_{n})}_{\infty}\\
&\geq \frac{1}{nL} \norm{g_{\star}(\X u_{\star})-\bar{g}_{\star}(\X u_{\star}) \ones_n }^2_2 - \sqrt{s_\star}\norm{ n^{-1} \X^\top (\Z-\bar{Z}\ones_n)}_{\infty},
\end{align*}
where the first inequality holds since
\[\inner{u_\star}{\X^\top(\Z - \bar{Z} \ones_{n})}
\leq \norm{u_\star}_1 \norm{\X^\top(\Z - \bar{Z} \ones_{n})}_{\infty}\]
and $\norm{u_\star}_1\leq\sqrt{s_\star}\norm{u_\star}_2=\sqrt{s_\star}$ since $u_\star$ is a $s_\star$-sparse unit vector,
while the second inequality can be seen by applying Lemma \ref{lem:ID} with $w=\ones_n$ and $g=g_\star$.
Next, we have
\begin{multline*} \frac{1}{L^2n} \norm{g_{\star}(\X u_{\star})-\bar{g}_{\star}(\X u_{\star}) \ones_n }^2_2
\geq \frac{1}{L^2n} \norm{g_{\star}(\X u_{\star})-\iso_{-\X u}(g_{\star}(\X u_\star))}^2_2\\ \geq \alpha\norm{u_\star-(-u_\star)}^2_2 = 4\alpha,\end{multline*}
where the first step holds since trivially, the constant vector $\bar{g}_{\star}(\X u_{\star}) \ones_n$ agrees with the ordering of $-\X u_\star$, while the second
step holds by assumption~\eqref{eqn:ID} applied with $u=-u_\star$. In particular, by the assumption of the lemma, 
we see that $\inner{\tilde{u}_0}{u_\star}>0$, verifying that $\tilde{u}_0\neq 0$. Note that, by definition, we 
have
\[u_0 =  \frac{\HT(\tilde{u}_0)}{ \norm{\HT(\tilde{u}_0)}_2 }.\]

Next, we apply Lemma \ref{lem:HT} with $v = \frac{\tilde{u}_0}{\norm{\HT(\tilde{u}_0)}_2}$ and $w=u_\star$.
Noting that $\HT(v) = \frac{\HT(\tilde{u}_0)}{\norm{\HT(\tilde{u}_0)}_2}=u_0$,
this lemma yields
\[\biginner{\frac{\tilde{u}_0}{\norm{\HT(\tilde{u}_0)}_2}-u_0}{u_\star-u_0}\leq \frac{\sqrt{s_\star}}{2\sqrt{s}}\norm{u_\star - u_0}^2_2 = \sqrt{\frac{s_\star}{s}}\left(1 - \inner{u_0}{u_\star}\right),\]
where the last step holds since $u_0$ and $u_\star$ are both unit vectors.
We also have
\[\biginner{\frac{\tilde{u}_0}{\norm{\HT(\tilde{u}_0)}_2}-u_0}{u_0}=0,\]
since these two vectors have disjoint support by definition. Therefore,
\[\biginner{\frac{\tilde{u}_0}{\norm{\HT(\tilde{u}_0)}_2}-u_0}{u_\star}\leq \sqrt{\frac{s_\star}{s}}\left(1 - \inner{u_0}{u_\star}\right),\]
and after rearranging terms, we have
\[\frac{1}{\norm{\HT(\tilde{u}_0)}_2} \inner{\tilde{u}_0}{u_\star} \leq \sqrt{\frac{s_\star}{s}} + \left(1 - \sqrt{\frac{s_\star}{s}}\right)\inner{u_0}{u_\star}.\]
Combining this with the calculations above, we have proved that
\[
 \inner{u_0}{u_\star}
\geq \left(1-\sqrt{\frac{s_\star}{s}}\right)^{-1}\left(\frac{1}{\norm{\HT(\tilde{u}_0)}_2} \inner{\tilde{u}_0}{u_\star} - \sqrt{\frac{s_\star}{s}}\right).\]
Combining this with our lower bound on $\inner{\tilde{u}_0}{u_\star}$ calculated above, we now have
\begin{multline*}
 \inner{u_0}{u_\star}
\geq \frac{1}{\norm{\HT(\tilde{u}_0)}_2}\cdot \left(1-\sqrt{\frac{s_\star}{s}}\right)^{-1}\cdot\\ \left( \frac{1}{nL} \norm{g_{\star}(\X u_{\star})-\bar{g}_{\star}(\X u_{\star}) \ones_n }^2_2 - \sqrt{s_\star}\norm{ n^{-1} \X^\top (\Z-\bar{Z} \ones_n)}_{\infty} - \sqrt{\frac{s_\star}{s}} \cdot {\norm{\HT(\tilde{u}_0)}_2}\right).\end{multline*}
Next, writing $S\subseteq\{1,\dots,p\}$ to 
denote the support of $u_0$ (with $|S|\leq s$), we calculate
\begin{multline*}
\norm{\HT(\tilde{u}_0)}_2
=\norm{n^{-1}\X_S^\top(\Y-\bar{Y}\ones_n)}_2\\
\leq \norm{n^{-1}\X_S^\top(g_\star(\X u_\star) - \bar{g}_\star(\X u_\star))}_2 + \norm{n^{-1}\X_S^\top(\Z-\bar{Z}\ones_n)}_2\\
\leq \norm{n^{-1}\X_S^\top(g_\star(\X u_\star) - \bar{g}_\star(\X u_\star))}_2
 + \sqrt{s}\cdot \norm{n^{-1}\X^\top(\Z-\bar{Z}\ones_n)}_{\infty},
\end{multline*}
and so we now have
\begin{multline*}
 \inner{u_0}{u_\star}
\geq \frac{1}{\norm{\HT(\tilde{u}_0)}_2}\cdot \left(1-\sqrt{\frac{s_\star}{s}}\right)^{-1}\cdot\\ \bigg( \frac{1}{nL} \norm{g_{\star}(\X u_{\star})-\bar{g}_{\star}(\X u_{\star}) \ones_n }^2_2 - 2\sqrt{s_\star}\norm{ n^{-1} \X^\top (\Z-\bar{Z}\ones_n)}_{\infty} \\ - \sqrt{\frac{s_\star}{s}} \cdot \norm{n^{-1}\X_S^\top(g_\star(\X u_\star) - \bar{g}_\star(\X u_\star)\ones_n)}_2 \bigg).\end{multline*}
Furthermore,
\begin{align*}
\norm{n^{-1}\X_S^\top(g_\star(\X u_\star) - \bar{g}_\star(\X u_\star)\ones_n)}_2
&=\sup_{u\in\R^p: \norm{u}_2\leq 1, \text{support}(u)\subseteq S}\left|n^{-1}u^\top\X^\top(g_\star(\X u_\star)-\bar{g}_\star(\X u_\star)\ones_n)\right|\\
&\leq\sup_{u\in\R^p: \norm{u}_2\leq 1, \text{support}(u)\subseteq S}\norm{n^{-1}\X u}_2\norm{(g_\star(\X u_\star)-\bar{g}_\star(\X u_\star)\ones_n)}_2\\
&\leq \sqrt{\frac{\beta}{n}}\cdot \norm{g_\star(\X u_\star)-\bar{g}_\star(\X u_\star)\ones_n}_2,
\end{align*}
where the last step holds by assumption~\eqref{eqn:sparse_eig}. Combining everything, we have shown that
\begin{multline*}
 \inner{u_0}{u_\star}
\geq \frac{1}{\norm{\HT(\tilde{u}_0)}_2}\cdot \left(1-\sqrt{\frac{s_\star}{s}}\right)^{-1}\cdot\\ \bigg( \frac{1}{nL} \norm{g_{\star}(\X u_{\star})-\bar{g}_{\star}(\X u_{\star}) \ones_n }^2_2 - 2\sqrt{s_\star}\norm{ n^{-1} \X^\top (\Z-\bar{Z}\cdot \mathbf{1})}_{\infty} \\ - \sqrt{\frac{\beta s_\star}{s n}} \cdot \norm{g_{\star}(\X u_{\star})-\bar{g}_{\star}(\X u_{\star}) \ones_n }_2 \bigg).\end{multline*}

Next, recalling that we have shown $\frac{1}{L^2n} \norm{g_{\star}(\X u_{\star})-\bar{g}_{\star}(\X u_{\star}) \ones_n }^2_2\geq 4\alpha$, since we've assumed that $s\geq s_\star \cdot \frac{\beta}{\alpha}$ we have
\begin{multline*}
\frac{1}{nL} \norm{g_{\star}(\X u_{\star})-\bar{g}_{\star}(\X u_{\star}) \ones_n }^2_2- \sqrt{\frac{\beta s_\star}{s n}} \cdot \norm{g_{\star}(\X u_{\star})-\bar{g}_{\star}(\X u_{\star}) \ones_n }_2\\
\geq \frac{1}{nL} \norm{g_{\star}(\X u_{\star})-\bar{g}_{\star}(\X u_{\star}) \ones_n }^2_2- \sqrt{\frac{\alpha}{n}}\cdot \norm{g_{\star}(\X u_{\star})-\bar{g}_{\star}(\X u_{\star}) \ones_n }_2
\geq 2\alpha L.
\end{multline*}
Therefore,
\[
 \inner{u_0}{u_\star}
\geq \frac{1}{\norm{\HT(\tilde{u}_0)}_2}\cdot \left(1-\sqrt{\frac{s_\star}{s}}\right)^{-1}\cdot\\ \left( 2\alpha L - 2\sqrt{s_\star}\norm{ n^{-1} \X ^\top(\Z-\bar{Z}\ones_n)}_{\infty}  \right).\]
This completes the proof.

\subsection{Proofs for Proposition \ref{thm:Pred_err}} \label{pf:Pred_err}

By definition of isotonic regression,
\begin{eqnarray*}
&&n^{-1/2}\norm{\iso_{\X u_t}\Y - g_\star(\X u_\star)}_2\\ &\leq& n^{-1/2}\norm{\iso_{\X u_t}\Y - \iso_{\X u_t}g_\star(\X u_\star) }_2 + n^{-1/2}\norm{\iso_{\X u_t}g_\star(\X u_\star) - g_\star(\X u_\star)}_2 \\
&\leq&  n^{-1/2}\norm{\iso_{\X u_t}\Y - \iso_{\X u_t}g_\star(\X u_\star) }_2 +  n^{-1/2}\norm{g_\star(\X u_t) - g_\star(\X u_\star)}_2\\
&\leq& \textnormal{Err}_\iso (\Z)  + L\sqrt{\beta}\norm{u_t-u_\star}_2,
\end{eqnarray*}
where the second inequality is from the contractive property of isotonic regression in $\ell_2$-norm (see \citet{yang2018}), and $\textnormal{Err}_\iso (\Z)$ is defined as in the proof of Theorem~\ref{thm:converge_main}. We finish the proof by plugging in the results from Lemma \ref{lem:bound_err_terms} and Theorem \ref{thm:converge_main}:
\begin{align*}
    & n^{-1/2}\norm{\iso_{\X u_t}\Y - g_\star(\X u_\star)}_2 \\
    \leq & L\sqrt{2\beta}\cdot r^t + C(L\sqrt{\beta} + 4\sigma) \left(\eps_n + \frac{s^{1/2}\log(np/\delta)}{n^{1/3}}\right) ~\text{with probability at least $1-\delta$}.
\end{align*}

\subsection{Proofs for the normal mixture $\X$ (Proposition \ref{prop:converge_normal_mix})} \label{pf:rand_X}

In this section we provide the proof for Proposition \ref{prop:converge_normal_mix}. The main body of the proof is to verify that under the assumptions in section \ref{sec:assumption_rand}, conditions  \eqref{eqn:sparse_eig} and \eqref{eqn:ID} are satisfied with high probability. We show this in the following two lemmas.  

For the upper bound on sparse eigenvalue condition, we have the following result: 
\begin{lemma}\label{thm:eig_random}
Assuming that the rows of $\X$ are \iid~draws from a distribution on $X\in\R^p$, with second moment $\Sigma =\EE{XX^\top}$ satisfying
\[c_0 \ident_p \preceq \Sigma \preceq c_1\ident_p ~\text{for some constants $c_1\geq c_0>0$,}\] ~then for all $\delta >0$, as $n$ is large enough, \eqref{eqn:sparse_eig} is satisfied with probability at least $1-\delta/2$ with $\beta=2c_1$.
\end{lemma}

For the lower bound on sparse monotone single index eigenvalues, we have the following lemma:

\begin{lemma}\label{thm:ID_random}
Under assumptions \eqref{eqn:lipschitz},  \eqref{eqn:randomX_normal}, \eqref{eqn:randomX_var}, for all $\delta>0$, as $n$ is sufficiently large, there exists $\epsilon_n \geq C_\alpha \cdot \sqrt{ \frac{s\log(np/\delta)}{n}}$ that ~\eqref{eqn:ID} is satisfied with probability at least $1-\delta/2$, with $\alpha>0$ and $C_\alpha>0$ being constants depending on $L,c_0,c_1,B,\nu,V$ but not on $n,p,s,\delta$. 
\end{lemma}

Combining Lemmas \ref{thm:eig_random} and \ref{thm:ID_random} we finish the proof of Proposition \ref{prop:converge_normal_mix}. The proof of Lemmas \ref{thm:eig_random} and \ref{thm:ID_random} are deferred in section \ref{pf:eig_random}.

\subsubsection{Proof of  Lemmas \ref{thm:eig_random} and \ref{thm:ID_random}}\label{pf:eig_random}
\begin{proof}[Proof of Lemma \ref{thm:eig_random}]
For any fixed support $S\subset[p]$ with $|S| = 2s + s_\star$, by 5.40 of \cite{vershynin2012},
\[\PP{\norm{\frac{1}{n}\X_S^\top\X_S - \cov(X_S)}\leq \sigma^2 C\left( \sqrt{\frac{2s+s_\star + \log(1/\delta)}{n}} +\frac{2s+s_\star + \log(1/\delta)}{n}\right)}  \geq 1- \delta/2,\]
for a universal constant $C$.
Taking a union bound over all ${p\choose 2s+s_\star}$ possible supports $S$ of size $|S|=2s+s_\star$,
\begin{multline*}\mathbb{P}\left\{\sup_S\norm{\frac{1}{n}\X_S^\top\X_S - \cov(X_S)} \right.\\
\left. \leq \sigma^2 C\left( \sqrt{\frac{2s+s_\star +(2s+s_\star ) \log(p/\delta)}{n}} +\frac{2s+s_\star + (2s+s_\star ) \log(p/\delta)}{n}\right)\right\} \geq 1- \delta/2.\end{multline*}
Simplifying,
\[\PP{\sup_S\norm{\frac{1}{n}\X_S^\top\X_S - \cov(X_S)}\leq 6\sigma^2 C\left( \sqrt{\frac{s \log(p/\delta)}{n}} +\frac{s\log(p/\delta)}{n}\right)}  \geq 1- \delta/2.\]
We can rewrite this as
\[\PP{\sup_{u \in \mathbb{S}^{p-1}_s}\frac{1}{n}\norm{\X u}^2_2 \leq c_1 + 6\sigma^2 C\left( \sqrt{\frac{s \log(p/\delta)}{n}} +\frac{s\log(p/\delta)}{n}\right)}  \geq 1- \delta/2,\]
where 
\[\mathbb{S}^{p-1}_s = \left\{u\in\R^p : \norm{u}_2=1,\ \textnormal{$u$ is $s$-sparse}\right\}\]
is the set of $s$-sparse unit vectors in $\R^p$.
Let $\beta = 2c_1$, then with $n$ sufficiently large, 
\[\PP{\sup_{u \in\mathbb{S}^{p-1}_s}\frac{1}{n}\norm{\X u}^2_2 \leq \beta }  \geq 1- \delta/2.\]
This concludes the proof of Lemma \ref{thm:eig_random}.
\end{proof}

\begin{proof}[Proof of Lemma \ref{thm:ID_random}]
We would like to place a lower bound on $\frac{1}{L^2n}\norm{g(\X u) - g_\star(\X u_\star)}^2_2$ for all $u \in \mathbb{S}_s^{p-1}$ with $\norm{u-u_\star}_2 \geq \epsilon_n$ and all monotone non-decreasing functions $g:\R\rightarrow[-B,B]$, where $[-B,B]$ is the range of $g_\star$. 
For $\alpha >0$ (we will specify $\alpha$ later in the proof), we have
\begin{align*}
&\inf_{\substack{u \in \mathbb{S}_s^{p-1}, \norm{u-u_\star}_2 \geq \epsilon_n\\\text{non-decr.~$g:\R\rightarrow[-B,B]$}}}\Bigg(\sqrt{\frac{1}{L^2n}\norm{g(\X u) - g_\star(\X u_\star)}^2_2}- \sqrt{\alpha}\norm{u-u_\star}_2\Bigg)\\
&=\inf_{\substack{u \in \mathbb{S}_s^{p-1}, \norm{u-u_\star}_2 \geq \epsilon_n\\\text{non-decr.~$g:\R\rightarrow[-B,B]$}}}\Bigg(\sqrt{\frac{1}{L^2n}\sum_i \big(g(X_i^\top u) - g_\star(X_i^\top u_\star)\big)^2}- \sqrt{\alpha}\norm{u-u_\star}_2\Bigg)\\
&\geq \inf_{\substack{u \in \mathbb{S}_s^{p-1}, \norm{u-u_\star}_2 \geq \epsilon_n\\\text{non-decr.~$g:\R\rightarrow[-B,B]$}}}\Bigg(\frac{1}{Ln}\sum_i \big|g(X_i^\top u) - g_\star(X_i^\top u_\star)\big|- \sqrt{\alpha}\norm{u-u_\star}_2\Bigg),
\end{align*}
where the last step holds by the standard inequality relating the $\ell_1$ and $\ell_2$ norms.
We can further decompose this as
\[\inf_{\substack{u \in \mathbb{S}_s^{p-1}, \norm{u-u_\star}_2 \geq \epsilon_n\\\text{non-decr.~$g:\R\rightarrow[-B,B]$}}}\Bigg(\sqrt{\frac{1}{L^2n}\norm{g(\X u) - g_\star(\X u_\star)}^2_2} - \sqrt{\alpha}\norm{u-u_\star}_2\Bigg)
\geq \text{Term 1} - \text{Term 2}\]
where
\begin{multline*}\text{Term 1} = \inf_{\substack{u \in \mathbb{S}_s^{p-1}, \norm{u-u_\star}_2 \geq \epsilon_n\\\text{non-decr.~$g:\R\rightarrow[-B,B]$}}}\Bigg(\frac{1}{L}\EE{\big|g(X^\top u) - g_\star(X^\top u_\star)\big|} - \sqrt{\alpha}\norm{u-u_\star}_2\Bigg)\\
\end{multline*}
and
\[\text{Term 2} = \sup_{\substack{u \in \mathbb{S}_s^{p-1}, \norm{u-u_\star}_2 \geq \epsilon_n\\\text{non-decr.~$g:\R\rightarrow[-B,B]$}}}\bigg( \frac{1}{L}\EE{\big|g(X^\top u) - g_\star(X^\top u_\star)\big|} - \frac{1}{Ln}\sum_i \big|g(X_i^\top u) - g_\star(X_i^\top u_\star)\big|\bigg).
\]
For Term 1, by Lemma \ref{lem:normal_mix} \eqref{eqn:ID_random}, which states that for all $s$-sparse unit $u$ and any non-decreasing function $g:\R\rightarrow[-B,B]$ we have
\[\EE{\big|g(X^\top u) - g_\star(X^\top u_\star)\big|}^2\geq \alpha_\star\norm{u-u_\star}^2_2,\]
where $\alpha_\star >0$ is a constant which does not depend on $n,p,s,\eps_n$. Therefore, let $\alpha = \frac{\alpha_\star}{2L^2}$,
\[\text{Term 1} \geq \sqrt{\alpha}\cdot \epsilon_n. \]

For Term 2, we will use concentration bounds to control Term 2. First, since both $g$ and $g_\star$ take values in $[-B,B]$, and therefore
$\big|g(X_i^\top u) - g_\star(X_i^\top u_\star)\big|\in[0,2B]$ for any $X_i$ and any non-decreasing $g:\R\rightarrow[-B,B]$ and $u \in \mathbb{S}_s^{p-1}$, we can see that
 replacing a single $X_i$ with a different vector $X_i'$ can perturb the value of Term 2 by at most $\frac{2B}{n}$. Therefore by McDiarmid's inequality \cite{McDiarmid}, for any $\delta_1>0$,
\[\PP{\text{Term 2} > \EE{\text{Term 2}} +2B\cdot  \sqrt{\frac{\log(1/\delta_1)}{2n}}} \leq \delta_1.\]
Next, by the symmetrization lemma \cite{Symmetrization}, we have
\[\EE{\text{Term 2}} \leq 2\EE{\sup_{\substack{s \in \mathbb{S}_s^{p-1}\\\text{non-decr.~$g:\R\rightarrow[-B,B]$}}}\frac{1}{n}\sum_i \xi_i\big|g(X_i^\top u) - g_\star(X_i^\top u_\star)\big|},\]
where $\xi_i\iidsim\text{Unif}\{\pm 1\}$ are \iid~Rademacher variables, drawn independently from the $X_i$'s. Next, consider the composition
\[X \mapsto \big( g(X_i^\top u) - g_\star(X_i^\top u_\star)\big) \mapsto \big|g(X_i^\top u) - g_\star(X_i^\top u_\star)\big|.\]
The second map is the absolute value function, which is $1$-Lipschitz. By the Lipschitz inequality for Rademacher complexity, therefore,
\[\EE{\text{Term 2}} \leq 4\EE{\sup_{\substack{s \in \mathbb{S}_s^{p-1}\\\text{non-decr.~$g:\R\rightarrow[-B,B]$}}}\frac{1}{n}\sum_i \xi_i\big(g(X_i^\top u) - g_\star(X_i^\top u_\star)\big)} \leq 16B\sqrt{\frac{s\log(np)}{n}}.\]
The last inequality is because of Lemma \ref{lem:conc}.

Therefore, combining everything, with probability at least $1-\delta/2$,
\[\text{Term 2} \leq B \left(16\sqrt{\frac{s\log(np)}{n}} +  \sqrt{\frac{2\log(2/\delta)}{n}}\right).\]
With $n$ sufficiently large, let $\epsilon_n \geq \frac{16B}{\sqrt{\alpha}} \sqrt{\frac{s\log(np/\delta)}{n}}$, $\text{Term 2} \leq 16B \sqrt{\frac{s\log(np/\delta)}{n}} \leq \sqrt{\alpha}\cdot \epsilon_n$. This concludes the proof of Lemma \ref{thm:ID_random}.
\end{proof}

\subsubsection{Supporting lemmas}\label{pf:rand_supp}

\begin{lemma}\label{lem:normal_mix}
Suppose assumptions \eqref{eqn:lipschitz},  \eqref{eqn:randomX_normal}, and \eqref{eqn:randomX_var} hold, 
then 
\begin{equation}\label{eqn:ID_random}
\EE{\big|g(X^\top u) - g_\star(X^\top u_\star)\big|}^2\geq \alpha_\star\norm{u-u_\star}^2_2
\end{equation}
holds for all $s$-sparse unit vectors $u$, and any non-decreasing function $g: \R \rightarrow [-B,B]$. where $\alpha_\star>0$ is a constant depending on $L,c_0,c_1,B,\nu,V$ but not on $n,p,s$.
\end{lemma}

\begin{proof}
Let $A\in\{1,\dots,K\}$ be the latent variable indicating which component $X$ was drawn from. Fix any $u \in \mathbb{S}^{p-1}_s$. Then for any $g$,
\begin{multline*}\EE{\big|g(X^\top u) - g_\star(X^\top u_\star)\big|} \\= \sum_{k=1}^K a_k \EEst{\big|g(X^\top u) - g_\star(X^\top u_\star)\big|}{A=k} =  \sum_{k=1}^K a_k \EE{\big|g(R_k) - g_\star(S_k)\big|},\end{multline*}
where $(R_k,S_k)$ is bivariate normal with mean 
\[\left(\begin{array}{c}\mu_k^\top u\\\mu_k^\top u_\star\end{array}\right)\eqqcolon m,\]
and covariance
\[\left(\begin{array}{cc}u^\top\Sigma_k u & u^\top \Sigma_k u_\star\\  u_\star^\top \Sigma_k u&u_\star^\top \Sigma_k u_\star\end{array}\right).\]
Here we can calculate 
\[\rho =  \frac{u_\star^\top \Sigma_k u}{\sqrt{ u^\top \Sigma_k u\cdot  u_\star^\top \Sigma_k u_\star}} 
=  \frac{\frac{c}{2}u^\top \Sigma_k u + \frac{1}{2c}u_\star^\top \Sigma_k u_\star-\frac{1}{2}(c^{1/2}u-c^{-1/2}u_\star)^\top \Sigma_k(c^{1/2}u-c^{-1/2}u_\star)}{\sqrt{ u^\top \Sigma_k u\cdot  u_\star^\top \Sigma_k u_\star}} ,\]
for any $c>0$. Choosing $c = \sqrt{\frac{  u_\star^\top \Sigma_k u_\star}{u^\top \Sigma_k u}}$, we obtain
\[\rho 
=  1 - \frac{(c^{1/2}u-c^{-1/2}u_\star)^\top \Sigma_k(c^{1/2}u-c^{-1/2}u_\star)}{2\sqrt{ u^\top \Sigma_k u\cdot  u_\star^\top \Sigma_k u_\star}} \leq 1 - \frac{c_0}{2c_1}\norm{c^{1/2}u-c^{-1/2}u_\star}^2_2 ,\]
where $c_0$ and $c_1$ are the smallest and largest restricted eigenvalues of $\Sigma_k$. Next,
\[\norm{c^{1/2}u-c^{-1/2}u_\star}^2_2  = c\norm{u}^2_2 + c^{-1}\norm{u_\star}^2_2 - 2\inner{u}{u_\star} = c + c^{-1} - 2\inner{u}{u_\star} \geq 2 -2\inner{u}{u_\star}  = \norm{u-u_\star}^2_2,\]
since $u$ and $u_\star$ are both unit vectors. Combining everything,
\[\rho \leq 1 - \frac{c_0}{2c_1}\norm{u-u_\star}^2_2.\]
Note that $|m_2|\leq V$ since $\norm{\mu_k}_2\leq V$, and $u^\top\Sigma_k u,u_\star^\top \Sigma_k u_\star \geq c_0$ since  $ \Sigma_k\succeq c_0\ident_p$. 
Now suppose for the moment that $g_\star(T)-g_\star(-T)\geq C$ for some $T>0$ and $C>0$. Applying Lemma~\ref{lem:bivariate_normal} in the Appendix, we see that 
\[\EE{\big|g(R_k) - g_\star(S_k)\big|} \geq C_0\sqrt{1-(\rho\vee 0)^2},\]
where $C_0>0$ depends only on $L,C,T,c_0,V$,
for each $k=1,\dots,K$.  Plugging in our bound on $\rho$, we see that
\[\rho\vee 0 \leq  1 - \frac{c_0}{4c_1}\norm{u-u_\star}^2_2\]
must hold, since $c_0\leq c_1$ and $\norm{u-u_\star}^2_2\leq 4$ (since $u$ and $u_\star$ are unit vectors).
\[\EE{\big|g(R_k) - g_\star(S_k)\big|} \geq C_0 \sqrt{1 - \left( 1 - \frac{c_0}{4c_1}\norm{u-u_\star}^2_2\right)^2} \geq C_0 \sqrt{\frac{c_0}{2c_1}} \norm{u-u_\star}_2.\]
Since this is true for each $k$, therefore we have
\[\EE{\big|g(X^\top u) - g_\star(X^\top u_\star)\big|}  \geq C_0 \sqrt{\frac{c_0}{2c_1}} \norm{u-u_\star}_2.\]
 
Finally, we just need to find $T,C>0$ such that $g_\star(T)-g_\star(-T)\geq C$.
Recall that $\VV{g_\star(X^\top u_\star)}\geq \nu^2>0$ by assumption. Recall also that $g_\star$ takes values in $[-B,B]$.
 Define
\[T =V+\sqrt{c_1}\cdot  \Phi^{-1}\left(1-\frac{\nu^2}{16B^2}\right).\]
Then
\begin{multline*}\PP{|X^\top u_\star|>T} = \max_k\PP{\big|\mathcal{N}(u_\star^\top \mu_k,u_\star^\top\Sigma_ku_\star)\big|>T}\\
\leq \max_k \PP{|\mathcal{N}(0,1)| > \frac{T-|u_\star^\top \mu_k|}{\sqrt{u_\star^\top\Sigma_ku_\star}}} \leq \frac{\nu^2}{8B^2},\end{multline*}
by our bounds on $\mu_k$ and $\Sigma_k$ and our definition of $T$.
 Now let $C=g_\star(T)-g_\star(-T)$. Then
\begin{align*}
\nu^2& = \VV{g_\star(X^\top u_\star)}\\
& \leq \EE{(g_\star(X^\top u_\star)-g_\star(0))^2}\\
& =  \EE{(g_\star(X^\top u_\star)-g_\star(0))^2\cdot\One{|X^\top u_\star|>T}}+\EE{(g_\star(X^\top u_\star)-g_\star(0))^2\cdot\One{|X^\top u_\star|\leq T}}\\
& \leq   \EE{4B^2\cdot\One{|X^\top u_\star|>T}}+\EE{C^2}\\
&=4B^2\PP{|X^\top u_\star|>T} + C^2\\
&\leq 4B^2\cdot \frac{\nu^2}{8B^2} + C^2\\
&=\nu^2/2+C^2,
\end{align*}
and therefore,
we must have $C\geq \nu/\sqrt{2}$.
Combining everything,
\[\EE{\big|g(X^\top u) - g_\star(X^\top u_\star)\big|}  \geq \sqrt{\alpha_\star} \norm{u-u_\star}_2\]
for all $u \in \mathbb{S}^{p-1}_s$,
where $\alpha_\star>0$ is a constant depending on $L,c_0,c_1,B,\nu,V$.
\end{proof}

\begin{lemma}\label{lem:conc} 
With $g_\star: \R \rightarrow [a,b]$, and $b-a \leq 2B$,
\[\EE{\sup_{\substack{\text{$s$-sparse unit $u$}\\\text{non-decr.~$g:\R\rightarrow[a,b]$}}}\frac{1}{n}\sum_i \xi_i\big(g(X_i^\top u) - g_\star(X_i^\top u_\star)\big)}\leq4B \sqrt{\frac{s\log(np)}{n}}\]
\end{lemma}

\begin{proof}[Proof of Lemma~\ref{lem:conc}]
The expectation can be simplified as follows:
\begin{align*}
& \EE{\sup_{\substack{\text{$s$-sparse unit $u$}\\\text{non-decr.~$g:\R\rightarrow[a,b]$}}}\frac{1}{n}\sum_i \xi_i\big(g(X_i^\top u) - g_\star(X_i^\top u_\star)\big)}\\
=& \EE{\sup_{\substack{\text{$s$-sparse unit $u$}\\\text{non-decr.~$g:\R\rightarrow[a,b]$}}}\frac{1}{n}\sum_i \xi_i\cdot \big(g(X_i^\top u)  - a\big)}-4\EE{ \frac{1}{n}\sum_i \xi_i\cdot \big(g_\star(X_i^\top u_\star) - a \big)}\\
 =& \EE{\sup_{\substack{\text{$s$-sparse unit $u$}\\\text{non-decr.~$g:\R\rightarrow[a,b]$}}}\frac{1}{n}\sum_i \xi_i\cdot \big(g(X_i^\top u)  - a\big)}\\
 =& 2B\cdot \EE{\sup_{\substack{\text{$s$-sparse unit $u$}\\\text{non-decr.~$g:\R\rightarrow[0,1]$}}}\frac{1}{n}\sum_i \xi_i\cdot g(X_i^\top u)}.\end{align*}
where the next-to-last step holds since $\EE{\xi_i}=0$. 
Next, any non-decreasing function $g:\R\rightarrow[0,1]$ can be written as a (possibly infinite) convex combination of step functions. This means that for any $u$, the supremum over $g$ is attained at some step function, in other words, we can write
\begin{multline*}\EE{\sup_{\substack{\text{$s$-sparse unit $u$}\\\text{non-decr.~$g:\R\rightarrow[a,b]$}}}\frac{1}{n}\sum_i \xi_i\big(g(X_i^\top u) - g_\star(X_i^\top u_\star)\big)} \\ \leq 2B \cdot \EE{\sup_{\substack{\text{$s$-sparse unit $u$}\\t\in\R}}\frac{1}{n}\sum_i \xi_i\cdot \One{X_i^\top u \geq t}}.\end{multline*}
Now let $\pi_{u;X_1,\dots,X_n}$ be the permutation of $\{1,\dots,n\}$ induced by the ordering of the $X_i^\top u$'s, that is, the permutation $\pi\in\mathcal{S}_n$ that satisfies
\[X_{\pi(1)}^\top u \leq X_{\pi(2)}^\top u\leq \dots \leq X_{\pi(n)}^\top u.\]
To handle the possibility that this might not define a unique permutation, i.e.~since it might be the case that $\inner{X_i}{u} = \inner{X_{i'}}{u}$ for some $i\neq i'$, we place an arbitrary total ordering on $\mathcal{S}_n$ and then, in the case of ties, we define $\pi_{u;X_1,\dots,X_n}$ as the minimal permutation satisfying the ordering above.

Then the above can be rewritten as
\begin{multline*}\EE{\sup_{\substack{\text{$s$-sparse unit $u$}\\\text{non-decr.~$g:\R\rightarrow[a,b]$}}}\frac{1}{n}\sum_i \xi_i\big(g(X_i^\top u) - g_\star(X_i^\top u_\star)\big)} \\ \leq 2B \cdot \EE{\max\left\{0,\sup_{\substack{\text{$s$-sparse unit $u$}\\ k \in[n]}}\frac{1}{n}\sum_{i\geq k} \xi_{\pi_{u;X_1,\dots,X_n}(i)}\right\}}.\end{multline*}
Now define
\[\Pi_{p,s}(X_1,\dots,X_n) = \big\{\pi_{u;X_1,\dots,X_n} : \text{$s$-sparse unit $u\in\R^p$}\big\}\subseteq\mathcal{S}_n,\]
i.e.~the set of all permutations attained by any sparse vector $u$ (using the given feature vectors $X_1,\dots,X_n$). 
The above can therefore be rewritten as
\begin{multline*}\EE{\sup_{\substack{\text{$s$-sparse unit $u$}\\\text{non-decr.~$g:\R\rightarrow[a,b]$}}}\frac{1}{n}\sum_i \xi_i\big(g(X_i^\top u) - g_\star(X_i^\top u_\star)\big)} \\ \leq 2B \cdot \EE{\max\left\{0,\sup_{\substack{\pi\in \Pi_{p,s}(X_1,\dots,X_n)\\ k\in[n]}}\frac{1}{n}\sum_{i\geq k} \xi_{\pi(i)}\right\}}.\end{multline*}
By Lemma~\ref{lem:perms} in the Appendix, for any $X_1,\dots,X_n$ we have $|\Pi_{p,s}(X_1,\dots,X_n)|\leq n^{2s-1}p^s$. 
Now, for each $\pi$ and each $k$, $\frac{1}{n}\sum_{i\geq k} \xi_{\pi(i)}$ is a zero-mean subgaussian random variable at the scale $\frac{\sqrt{n-k+1}}{n}\leq n^{-1/2}$, and we are taking a supremum over $n^{2s-1}p^s\cdot n = n^{2s}p^s$ such variables. This expected value is therefore bounded as $\sqrt{\frac{2\log(2n^{2s}p^s)}{n}}\leq2 \sqrt{\frac{s\log(np)}{n}}$ (where the last step holds as long as $p\geq 2$). 
\end{proof}

\begin{lemma}\label{lem:bivariate_normal}
Let 
\[\left(\begin{array}{c}R\\S\end{array}\right)\sim \mathcal{N}\left(m , \left(\begin{array}{cc} r^2 & rs\cdot\rho \\ rs\cdot \rho&s^2\end{array}\right)\right),\]
and let $g,g_\star:\R\rightarrow\R$ be any non-decreasing functions, such that $g_\star$ is $L$-Lipschitz and satisfies $g_\star(T) - g_\star(-T)\geq C>0$ for some $T>0$. Then 
\[\EE{\big|g(R) - g_\star(S)\big|} \geq  C_0\sqrt{1-(\rho\vee 0)^2},\]
 where $C_0>0$ is a function of $L,T,C,s,|m_2|$ (not dependent on $\rho$), and is defined in the proof.
\end{lemma}
\begin{proof}[Proof of Lemma~\ref{lem:bivariate_normal}]

First, we will replace $R$ with $\tilde{R}=\frac{R-m_1}{r}$ and $S$ with $\tilde{S}=\frac{S-m_2}{s}$, to obtain
\[\left(\begin{array}{c}\tilde{R}\\\tilde{S}\end{array}\right)\sim \mathcal{N}\left(0, \left(\begin{array}{cc} 1 &\rho \\  \rho&1\end{array}\right)\right).\]
Define non-decreasing functions $\tilde{g}(t) = g(m_1+r\cdot t)$ and $\tilde{g}_\star(t) = g_\star(m_2+s\cdot t)$. Then we are looking to lower bound
\[\EE{\big|g(R) - g_\star(S)\big|}=\EE{\big|\tilde{g}(\tilde{R}) - \tilde{g}_\star(\tilde{S})\big|} .\]
Note that $\tilde{g}_\star$ is $\tilde{L}$-Lipschitz for $\tilde{L} = Ls$, and 
\[\tilde{g}_\star\left(\frac{T-m_2}{s}\right) - \tilde{g}_\star\left(\frac{-T-m_2}{s}\right) = g_\star(T) - g_\star(-T)\geq C,\]
and since $\tilde{g}_\star$ is nondecreasing, we can weaken this bound to
\[\tilde{g}_\star(\tilde{T}) - \tilde{g}_\star(-\tilde{T}) \geq C \text{ where }\tilde{T} = \frac{T+|m_2|}{s}.\]

First consider the case that $\rho\leq \rho_0$, where $\rho_0\in[0,1)$ is defined to satisfy $2Ls\sqrt{1-\rho_0^2}<\frac{1}{2}C\sqrt{2/\pi}$. In this case, define $c = \frac{\tilde g_\star(\tilde T)+\tilde g_\star(-\tilde T)}{2}$, then $\tilde g_\star(\tilde T)-c \geq \frac{C}{2}$ and $c - \tilde g_\star(-\tilde T)\geq \frac{C}{2}$. First suppose that $\tilde g(0)\leq c$. Then $\tilde g(r)\leq c$ for any $r\leq 0$ since $\tilde g$ is non-decreasing. Now, if $\tilde{S}\geq T$ and $\tilde{R}\leq 0$, this means that
\[ \tilde g_\star(\tilde{S}) \geq \tilde g_\star(T) \geq \frac{C}{2} + c \geq \frac{C}{2} + \tilde g(\tilde {R}),\]
and so $\big|\tilde g(\tilde{R} ) - \tilde g_\star(\tilde{S})\big| \geq \frac{C}{2}$. In other words,
\[\EE{\big|\tilde g(\tilde{R} ) - \tilde g_\star(\tilde{S})\big| }\geq\frac{C}{2}\cdot \PP{\tilde{S}\geq T,\tilde{R}\leq 0} .\]
Finally, 
\[\PP{\tilde{S}\geq T,\tilde{R}\leq 0} \geq \PP{\mathcal{N}\left(\left(\begin{array}{cc}0\\0\end{array}\right),\left(\begin{array}{cc}1&\rho_0\\\rho_0&1\end{array}\right)\right)\in [T,\infty)\times (-\infty,0]}\eqqcolon P_0(\rho_0,T)>0,\]
since for the first inequality, the probability is minimized at $\rho=\rho_0$ (under the assumption $\rho\leq \rho_0$), and is therefore lower bounded by some positive value $P_0 = P_0(T,\rho_0)$. 
Therefore, the conclusion of the lemma holds in this case, as long as we set $C_0 \leq \frac{C}{2}\cdot P_0(T,\rho_0)$.

Next we will work on the case that $\rho\geq \rho_0$.
 Let 
\[\check{S} = \rho\cdot \tilde{R} - \left(\tilde{S} - \rho\cdot \tilde{R}\right),\]
so that $(\tilde{R},\check{S})$ is equal in distribution to $(\tilde{R},\tilde{S})$.
Then
\[\EE{\big|\tilde{g}(\tilde{R}) - \tilde{g}_\star(\tilde{S})\big|} = \EE{\big|\tilde{g}(\tilde{R}) - \tilde{g}_\star(\check{S})\big|},\]
and so
\begin{align*}\EE{\big|\tilde{g}(\tilde{R}) - \tilde{g}_\star(\tilde{S})\big| }
&= \frac{1}{2}\EE{\Big(\big|\tilde{g}(\tilde{R}) - \tilde{g}_\star(\tilde{S})\big|+\big|\tilde{g}(\tilde{R}) - \tilde{g}_\star(\check{S})\big|\Big)}\\
&\geq \frac{1}{2}\EE{\big|\tilde{g}_\star(\tilde{S}) - \tilde{g}_\star(\check{S})\big|}\\
&=\frac{1}{2}\EE{\Big|\tilde{g}_\star\big(\tilde{S}\big) - \tilde{g}_\star\left(\frac{\tilde{S}+\check{S}}{2}\right)\Big| + \Big|\tilde{g}_\star\left(\frac{\tilde{S}+\check{S}}{2}\right)- \tilde{g}_\star\big(\check{S}\big)\Big|}\\
&=\frac{1}{2}\EE{\Big|\tilde{g}_\star\big(\tilde{S}\big) - \tilde{g}_\star\big(\rho\cdot \tilde{R}\big)\Big| + \Big|\tilde{g}_\star\big( \rho\cdot \tilde{R} \big) - \tilde{g}_\star\big(\check{S}\big)\Big|},
\end{align*}
where the third step holds since since $g_\star$ is non-decreasing.
Now let $W = \tilde{S} - \rho\cdot\tilde{R}\sim \mathcal{N}(0,1-\rho^2)$, and note $W\independent \tilde{R}$. Then we can rewrite this as 
\begin{align*}
&\frac{1}{2}\EE{\Big|\tilde{g}_\star\big(\tilde{S}\big) - \tilde{g}_\star\big(\rho\cdot \tilde{R}\big)\Big| + \Big|\tilde{g}_\star\big( \rho\cdot \tilde{R} \big) - \tilde{g}_\star\big(\check{S}\big)\Big|}\\
&=\frac{1}{2}\EE{\Big|\tilde{g}_\star\big(\rho\cdot \tilde{R} + W\big) - \tilde{g}_\star\big(\rho\cdot \tilde{R}-W\big)\Big| }\\
&=\frac{1}{2}\Ep{W}{\int_{t=-\infty}^{\infty}\phi(t)\cdot \Big|\tilde{g}_\star\big(\rho\cdot t + W\big) - \tilde{g}_\star\big(\rho\cdot t-W\big)\Big| dt }\text{ since $\tilde{R}\sim\mathcal{N}(0,1)$ is independent of $W$}\\
&\geq \frac{1}{2}\Ep{W}{\int_{t=-\tilde{T}/\rho}^{\tilde{T}/\rho}\phi(t)\cdot \Big|\tilde{g}_\star\big(\rho\cdot t + W\big) - \tilde{g}_\star\big(\rho\cdot t-W\big)\Big| dt}\text{ since the integrand is nonnegative}\\
&=\frac{1}{2}\Ep{W}{\int_{t=-\tilde{T}/\rho}^{\tilde{T}/\rho}\phi(t)\cdot \Big(\tilde{g}_\star\big(\rho\cdot t + |W|\big) - \tilde{g}_\star\big(\rho\cdot t-|W|\big)\Big) dt}\text{ since $\tilde{g}_\star$ is non-decreasing}\\
&=\frac{\phi(\tilde{T}/\rho)}{2}\Ep{W}{\int_{t=-\tilde{T}/\rho}^{\tilde{T}/\rho} \Big(\tilde{g}_\star\big(\rho\cdot t + |W|\big) - \tilde{g}_\star\big(\rho\cdot t-|W|\big)\Big) dt}\text{ since $\phi(t)\geq \phi(\tilde{T}/\rho)$ for all $|t|\leq \tilde{T}/\rho$}\\
&=\frac{\phi(\tilde{T}/\rho)}{2}\Ep{W}{\int_{t=-\tilde{T}/\rho}^{\tilde{T}/\rho} \tilde{g}_\star\big(\rho\cdot t + |W|\big) \;\mathsf{d}t- \int_{t=-\tilde{T}/\rho}^{\tilde{T}/\rho} \tilde{g}_\star\big(\rho\cdot t-|W|\big) dt}\\
&=\frac{\phi(\tilde{T}/\rho)}{2}\Ep{W}{\int_{t=-\tilde{T}/\rho+|W|/\rho}^{\tilde{T}/\rho+|W|/\rho} \tilde{g}_\star\big(\rho\cdot t \big) \;\mathsf{d}t- \int_{t=-\tilde{T}/\rho-|W|/\rho}^{\tilde{T}/\rho-|W|/\rho} \tilde{g}_\star\big(\rho\cdot t\big) dt} \\ &\text{ by a change of variables in each integral}\\
&=\frac{\phi(\tilde{T}/\rho)}{2}\Ep{W}{\int_{t=\tilde{T}/\rho-|W|/\rho}^{\tilde{T}/\rho+|W|/\rho} \tilde{g}_\star\big(\rho\cdot t \big) \;\mathsf{d}t - \int_{t=-\tilde{T}/\rho-|W|/\rho}^{-\tilde{T}/\rho+|W|/\rho} \tilde{g}_\star\big(\rho\cdot t\big) dt}\\ &\text{ by cancellation of the overlapping regions of integration}\\
&\geq \frac{\phi(\tilde{T}/\rho)}{2}\Ep{W}{\frac{2|W|}{\rho} \Big(\tilde{g}_\star(\tilde{T} - |W|) - \tilde{g}_\star(- \tilde{T} +|W|)\Big)}\text{ since $\tilde{g}_\star$ is non-decreasing}\\
&\geq\frac{\phi(\tilde{T}/\rho)}{2}\Ep{W}{\frac{2|W|}{\rho} \Big(\tilde{g}_\star(\tilde{T}) - \tilde{g}_\star(-\tilde{T}) - 2|W|\tilde{L}\Big)}\text{ since $\tilde{g}_\star$ is $\tilde{L}$-Lipschitz}\\
&\geq\frac{\phi(\tilde{T}/\rho)}{2}\Ep{W}{\frac{2|W|}{\rho} \Big(C- 2|W|\tilde{L}\Big)}\text{ since $\tilde{g}_\star({\tilde{T}}) - \tilde{g}_\star(-{\tilde{T}}) \geq C$}\\
&=\frac{1}{\rho}\phi(\tilde{T}/\rho)\left(C\EE{|W|}- 2\tilde{L}\EE{W^2}\right)\\
&=\frac{1}{\rho}\phi(\tilde{T}/\rho)\left(C\sqrt{2/\pi}\sqrt{1-\rho^2}- 2\tilde{L}(1-\rho^2)\right)\text{ since $W\sim\mathcal{N}(0,1-\rho^2)$.}
\end{align*}
Plugging in our definitions of $\tilde{T}$ and $\tilde{L}$ and combining everything,
\[\EE{\big|g(R) - g_\star(S)\big|}\geq \frac{1}{\rho}\cdot \phi\left(\frac{T+|m_2|}{s\rho}\right)\cdot \left(C\sqrt{2/\pi}\sqrt{1-\rho^2}- 2Ls(1-\rho^2)\right).\]
Now, recall that $\rho_0$ satisfies $2Ls\sqrt{1-\rho_0^2}<\frac{1}{2}C\sqrt{2/\pi}$. Then since $\frac{1}{\rho}\geq 1 \geq \rho_0$,
\[\EE{\big|g(R) - g_\star(S)\big|}\geq \sqrt{1-\rho^2}\cdot \rho_0\cdot \phi\left(\frac{T+|m_2|}{s\rho_0}\right)\cdot \left(C\sqrt{2/\pi}- 2Ls\sqrt{1-\rho_0^2}\right).\]
Therefore, as long as $C_0 \leq \rho_0\cdot \phi\left(\frac{T+|m_2|}{s\rho_0}\right)\cdot \left(C\sqrt{2/\pi}- 2Ls\sqrt{1-\rho_0^2}\right)$, in both cases ($\rho\geq \rho_0$ and $\rho<\rho_0$), we have proved that $\EE{\big|g(R) - g_\star(S)\big|}\geq C_0\sqrt{1-\rho^2}$ in the case that $\rho\geq \rho_0$, as desired.
\end{proof}

\end{document}